\title[Tropical integrable systems and Young tableaux]{Tropical integrable systems and Young tableaux:\\ Shape equivalence and Littlewood-Richardson correspondence}
\author{Shinsuke Iwao}
\address{Department of Mathematics, Tokai University, 4-1-1, Kitakaname, Hiratsuka, Kanagawa 259-1292, Japan.}
\email{iwao@tokai.ac.jp}
\date{\today}
\newtheorem{thm}{Theorem}[section]
\newtheorem{prop}[thm]{Proposition}
\newtheorem{lemma}[thm]{Lemma}
\newtheorem{defi}[thm]{Definition}
\newtheorem{example}[thm]{Example}
\newtheorem{rem}[thm]{Remark}
\newtheorem{conj}[thm]{Conjecture}
\newtheorem{cor}[thm]{Corollary}
\def\RR{\mathord{\mathbb{R}}}
\def\ZZ{\mathord{\mathbb{Z}}}
\def\kasane#1#2{\genfrac{}{}{0pt}{2}{#1}{#2}}
\def\la{\leftarrow}
\newcommand{\Tableau}[1]{%
\def\emp{\emptyset\bl}
\def\maru##1{\scalebox{1.3}{\lower 7pt\hbox{\textcircled{\scriptsize$##1$}}}\bl}%
\def\tama##1{\scalebox{1.3}{\lower 7pt\hbox{\textcircled{\scriptsize$##1$}}}}%
{\text{\tableau[sY]{#1}}}%
}%
\newcommand{\bm}[1]{\mbox{\boldmath{$#1$}}}
\def\h@iti#1{%
\bgroup%
\let\\=\cr%
\def\r{\hbox to 0pt{$\to$}}%
\def\d{\vbox to 0pt{\hbox to 0pt{$\searrow$}}}%
\vcenter{\tabskip=0pt\halign{&$##$\hspace{10pt}\cr#1\crcr}}%
\egroup%
}
\newcommand{\haiti}[1]{\hspace{4.5pt}\h@iti{#1}\hspace{-4.5pt}}
\def\sayou#1{%
\bgroup%
\let\\=\cr%
\def\+{\hbox{\lower 5pt\hbox{\scalebox{3.5}{$+$}}}}%
\def\-{\hbox{\lower 5pt\hbox{\scalebox{3.5}{$+$}\lower -2pt\hbox to 0pt{\hspace{-19.9pt}\scalebox{2.5}{$\circ$}}}}}%
\vcenter{\tabskip=0pt\halign{&\hfil$##$\hfil\cr#1\crcr}}%
\egroup%
}
\def\o#1{\overline{#1}}
\begin{document}

\maketitle

\begin{abstract}
We present a new characterization of the shape equivalent class and the Littlewood-Richardson correspondence of Young tableaux in terms of tropical (ultradiscrete) integrable systems.
As an application, an alternative proof of the ``shape change theorem'' is given.
\end{abstract}

\section{Introduction}

The {\it tropicalization} is the transformation
\[
a+b\mapsto \min[A,B],\qquad ab\mapsto A+B,\qquad a^{-1}\mapsto -A,
\]
through which the structure of rings $(+,\times,{}^{-1})$ is transformed into the structure of semi-fields $(\min,+,-)$.
For example, the tropicalization of a polynomial function is a piecewise linear function.
It is known there are a lot of interesting examples in the field of combinatorics, mathematical physics, {\it etc.}~where the tropicalization provides a new insight and an application.

One of the most significant case is the {\it Young tableau}.
The earliest study on the tropical aspects of Young tableaux was made by A.~N.~Kirillov~\cite{2001phco.conf...82K}, who introduced the {\it geometric RSK correspondence} (originally, tropical RSK correspondence\footnote{The word ``tropical'' nowadays has a different meaning.
Many researchers prefer to use the ``geometric RSK correspondence'' instead.}).
This correspondence was studied further by M.~Noumi and Y.~Yamada~\cite{noumi2004} by means of tropical (ultradiscrete) integrable systems.
In~\cite{noumi2004}, they showed the fact that the {\it row-bumming algorithm} is expressed as a recurrence equation of tropical matrices, which is the tropicalization of the {\it discrete Toda equation}.
This technique has been accepted as a fundamental tool for studies on the combinatorics of Young tableau and related topics. 
(For resent studies on the geometric RKS correspondence and its applications, see \cite{corwin2014tropical,nguyen2016variants,o2014geometric} and references therein.)

On the other hand, Y.~Mikami~\cite{mikami2012en}, Y.~Katayama, and S.~Kakei~\cite{kakei2015en} introduced another new correspondence between Young tableaux and tropical integrable systems.
Interestingly enough, their correspondence is apparently independent of Noumi and Yamada's correspondence.
In 2018, Iwao~\cite{iwao2018rims} presented a tropical characterization of the {\it rectification of skew tableaux} based on these correspondences.
We would expect that the combination of Noumi-Yamada's geometric tableaux and Katayama-Kakei's correspondence provides a rich tropical interpretations of the combinatorics of Young tableaux.

\renewcommand\thesubsubsection{\arabic{section}.\arabic{subsubsection}}

This work is a continuation of~\cite{iwao2018rims}.
In Section \ref{sec:2}, we briefly review the previous work~\cite{iwao2018rims}.
The jeu de taquin slide, which is a fundamental procedure of the combinatorics of Young tableaux, is expressed by the recurrence formula (\ref{eq:QandW}).
This formula is equivalent to the combinatorial procedure $\varphi_k$ (\S \ref{sec:2.3}).
The rectification of Young tableaux is now expressed as a composition of finitely many $\varphi_k$'s (see (\ref{eq:diagram-of-rectifiction})).
It is diagrammatically expressed by a planar diagram such as (\ref{eq:diagram-of-rectifiction}).
From this diagram, one can induce some {\it horizontal} diagram (see (\ref{eq:diagram-of-rectifiction-tate})) by getting each column ``together in one bundle.''
It is showed that these horizontal diagrams can be defined independently of the choice of planer diagrams.

In Section \ref{sec:3}, we deal with the ``dual'' object to the previous section.
It would be natural to ask ``what will happen if one gets each {\it row} of the planer diagram together in one bundle?''
The result is some {\it vertical} diagram (see (\ref{eq:diagram-of-rectification-yoko})).
We show (Proposition \ref{prop:map-of-UI}) that this vertical diagram is determined independently of the choice of planer diagrams.
The proof is based on the fact that the diagram is characterized by some $\mathcal{L}$-formula (\S \ref{sec:1.3}).
As an application, we give a proof of the ``shape change theorem~\cite{fulton_1996}'' (Theorem \ref{thm:main1}).

One can do both procedures for rows and columns simultaneously.
In Section \ref{sec:4}, we introduce some ``concentrated'' diagram by getting each row and each column together in one bundle (see (\ref{eq:diagram-of-rectification-compact})).
It is shown that this diagram is closely related with the {\it Littlewood-Richardson correspondence}.
Moreover, we present a new characterization of the Littlewood-Richardson correspondence in terms of the tropical mathematics (Theorem \ref{thm:main2}).

For convenience of readers, we gave a short introduction in a combinatorics of Young tableaux in \S \ref{sec:appA}.
The definition of the Takahashi-Satsuma Box-Ball system is given in \S \ref{sec:appB}.

\subsubsection{Notations for Young tableaux}

In this article, we follow the conventions in Fulton's book \cite{fulton_1996}.
Let $\lambda=(\lambda_1\geq \lambda_2\geq \dots\geq \lambda_\ell)$ be a Young diagram.
A {\it semi-standard tableau of shape $\lambda$} is obtained by filling the boxes in $\lambda$ with a number according to the following rules:
(i) in each row, the numbers are weakly increasing from left to right,
(ii) in each column, the numbers are strongly increasing from top to bottom.
A semi-standard tableau is often referred to as {\it tableau} shortly.
A tableau with $n$ boxes is called {\it standard} if it contains distinct $n$ numbers $1,2,\dots,n$.
Let $\lambda/\mu$ be a skew diagram.
A {\it skew (semi-standard) tableau of shape $\lambda/\mu$} is obtained by filling the boxes with a number according to the same rule for a tableau.
If a skew tableau with $n$ boxes contains distinct $n$ numbers $1,\dots,n$, it is said to be {\it standard}.
See \S\ref{sec:appA} for other definitions.

\subsubsection{Notations for tropicalization}\label{sec:1.3}

We use the notations of mathematical logic in order to simplify arguments for tropicalization.
For details, see \cite{iwao2018rims}.

Let $\mathcal{L}=\{+,\cdot,{}^{-1},1\}$ be a language, where $+$ and $\cdot$ are binary function symbols, ${}^{-1}$ is a unary function symbol, and $1$ is a constant symbol.
Define the $\mathcal{L}$-structures $\mathcal{M}=(M,+,\cdot,{}^{-1},1)$ and $\o{\mathcal{M}}=(\o{M},\o{+},\o{\cdot},\o{{}^{-1}},\o{1})$, and the morphism $M\to \o{M}; x\mapsto \overline{x}$ of $\mathcal{L}$-structures as follows:
\begin{itemize}
\item $M$ is the set of germs at $\epsilon=0$ of continuous functions $f(\epsilon)$ of $\epsilon>0$ that satisfy $\lim\limits_{\epsilon\to +0}\epsilon\log f(\epsilon)\in \RR$.
Here $+,\cdot,{}^{-1}$ are standard addition, multiplication, and multiplicative inverse\footnote{Abstractly, $M$ can be replaced with any semi-field $M^{\ast}=(M^\ast,+,\cdot,{}^{-1},1)$ such that (i) there exists a surjection $M^\ast\to \o{M}$ of $\mathcal{L}$-structures, and (ii) $\mathrm{Th}(M^\ast)\supset \mathrm{Th}(M)$.
Here $\mathrm{Th}(M)$ is the {\it theory} of $M$, which is the set of $\mathcal{L}$-sentences that are true over $M$.
}.
\item $\o{M}=\RR$, $\o{+}=\min$, $\o{\cdot}=+$, $\o{{}^{-1}}=-$.
\item $\overline{f(\epsilon)}=-\lim\limits_{\epsilon\to +0}\epsilon\log f(\epsilon)$.
\end{itemize}
The word ``$\mathcal{L}$-term'' means a subtraction-free rational function.
The morphism $M\to \o{M};f(\epsilon)\mapsto \o{f(\epsilon)}$ is called the {\it ultradiscretization} or {\it tropicalization}.

The correspondence among combinatorial, tropical, and geometric objects is given as follows:

$$
\vbox{\offinterlineskip%
\halign{\vrule width 0pt height 15pt depth 10pt%
\hfill $\vcenter{\hbox{#}}$ \hfill \vrule&\quad\hfill $\vcenter{\hbox{#}}$\hfill \quad\vrule &\quad\hfill $\vcenter{\hbox{#}}$\hfill\cr
Combinatorial & Tropical ($\simeq$ ultradiscrete) & Geometric \cr \hline\hline
%
%
\vbox{\hbox{Jeu de taquin slide} \hbox{starting from $k^\mathrm{th}$ row}} & The map $\varphi_k$ (\S \ref{sec:2.3}) & \vbox{\hbox{The discrete eq.~(\ref{eq:Laxform})} \hbox{$(L_j^t)_j\mapsto (L_j^{t+1})_j$,} 
\hbox{where $\o{R_0^t}=\o{E}([k])$}
} \cr\hline
 Rectification & Planer diagram (\ref{eq:diagram-of-rectifiction}) 
 & Composition of (\ref{eq:Laxform})
 \cr\hline
%
\vbox{\hbox{Sub-diagram consisted} \hbox{of empty boxes}}
& Associated tableau & Geometric tableau\cr\hline
Shape equivalent class &Associated circled array& \vbox{\hbox{``$F$-matrix version'' of}  \hbox{Geometric tableau \hspace{-15pt}\phantom{$k^\mathrm{th}$} (\ref{eq:F-geometric-tableau})}}\cr
}
}
$$

\section{Tropical (ultradiscrete) KP and jeu de taquin}\label{sec:2}

In this section, we shortly review the previous results~\cite{iwao2018rims}.
For definitions of the terms {\it ``jeu de taquin slide,'' ``inside corner,'' ``outside corner,''etc}, see \S \ref{sec:appA}.

\subsection{Tropical KP equation}\label{sec:2.1}

Let us consider the tropical KP equation:
\begin{equation}\label{eq:udKP}
F_{i,j}^t+F_{i,j+1}^{t+1}=\max\left[
F_{i+1,j+1}^t+F_{i-1,j}^{t+1},
F_{i,j+1}^t+F_{i,j}^{t+1}
\right].
\end{equation}
The following theorem is due to~\cite{kakei2015en}:
\begin{thm}[\cite{mikami2012en,kakei2015en}]
For a sequence of skew tableaux $S^0,S^1,S^2,\dots$, set
\[
F_{i,j}^t=
\left(
\begin{array}{cc}
\mbox{the number of boxes in $1^\mathrm{st},2^\mathrm{nd},\dots,i^{\mathrm{th}}$ rows in $S^t$ }\\
\mbox{which are indexed by a number smaller than or equal to $j$}
\end{array}
\right),
\]
where an empty box is considered as a box indexed by $0$.
If each $S^{t+1}$ is obtained from $S^t$ by a jeu de taquin slide, $(F_{i,j}^t)_{i,j,t}$ satisfies the tropical KP equation $(\ref{eq:udKP})$.
\end{thm}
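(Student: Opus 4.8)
The plan is to unpack the definition of $F_{i,j}^t$ and reduce the tropical KP equation to a purely combinatorial statement about how the row-counting numbers change under a single jeu de taquin slide. Write $N_i^t(j)$ for the number of boxes in the first $i$ rows of $S^t$ indexed by a number $\le j$ (with empty boxes counted as $0$), so that $F_{i,j}^t = N_i^t(j)$. I would first record the elementary monotonicity properties forced by the semistandard (and skew) tableau conditions: for fixed $t$, $F_{i,j}^t$ is weakly increasing in both $i$ and $j$, and the ``column-strict'' condition gives $F_{i,j}^t - F_{i-1,j}^t \le $ (length of row $i$), while the difference $F_{i,j+1}^t - F_{i,j}^t$ counts the boxes with entry exactly $j+1$ in rows $1,\dots,i$. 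These inequalities are exactly the tropicalizations of the Plücker-type relations that sit behind (\ref{eq:udKP}), so it is worth isolating them as a lemma before touching the dynamics.

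Next I would analyze a single jeu de taquin slide $S^t \to S^{t+1}$ locally. A slide moves a string of boxes, each box sliding either left (within its row) or up (into the row above); crucially each affected box changes its row index by at most one and its column index by at most one, and the entries themselves are permuted, not altered. The key bookkeeping identity I expect to need is a comparison between $F_{i,j}^{t+1}$ and the triple $F_{i,j+1}^t$, $F_{i+1,j+1}^t$, $F_{i-1,j}^t$: sliding can only move a box from row $i+1$ up into row $i$ (increasing $N_i$) or move a box from row $i$ up into row $i-1$ (decreasing $N_i$), and whether such a box carries entry $\le j$ or $\le j+1$ is what distinguishes the two branches of the $\max$. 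I would phrase this as: $F_{i,j}^{t+1} - F_{i,j+1}^t$ equals either $F_{i-1,j}^{t+1}-F_{i+1,j+1}^t$ or $F_{i,j}^{t+1}-F_{i,j+1}^{t}$ (trivially) depending on the local configuration of the slide path near rows $i-1,i,i+1$ and values $j,j+1$, and then show the ``wrong'' branch is always $\le$ the correct one using the monotonicity lemma.

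The cleanest route, and the one I would actually write up, is to verify (\ref{eq:udKP}) by a finite case check on the local slide picture: near any fixed $(i,j)$ only finitely many configurations of the sliding path are possible (the moving box passes through row $i$ from above, from the side, or not at all; and similarly for rows $i\pm1$), and in each case one computes the four quantities $F_{i,j}^t$, $F_{i,j+1}^{t+1}$, $F_{i+1,j+1}^t+F_{i-1,j}^{t+1}$, $F_{i,j+1}^t+F_{i,j}^{t+1}$ directly in terms of the unperturbed counts and checks the identity. Alternatively, since (\ref{eq:udKP}) is the tropicalization of an $\mathcal{L}$-term identity (the geometric KP / discrete Hirota equation in its $\tau$-function form), one could lift $F_{i,j}^t$ to subtraction-free rational functions of $\epsilon$ encoding the tableau as a Gelfand--Tsetlin-type array and invoke that the geometric identity holds, then ultradiscretize; but establishing the lift compatibly with jeu de taquin is itself work, so I would only mention it as motivation.

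The main obstacle will be the middle step: pinning down exactly which branch of the $\max$ is realized for each local slide configuration and, more annoyingly, proving the other branch does not exceed it. This is where the skew-tableau inequalities have to be used with care — a naive bound is off by the behavior of empty boxes (entry $0$) at the boundary $\mu$, and slides that begin at an inside corner adjacent to row $i$ need separate attention. I expect the bulk of the writeup to be a short but fiddly enumeration of these boundary cases, with the generic interior case being essentially immediate once the notation $N_i^t(j)$ is in place.
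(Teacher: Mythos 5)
First, a point of comparison that matters for your write-up: the paper itself contains no proof of this theorem. It is imported verbatim from Mikami and Katayama--Kakei, so there is no internal argument to measure you against; your proposal has to stand on its own. Your overall strategy --- reduce (\ref{eq:udKP}) to a local statement about how the counting function changes across a single slide and verify it by a finite case check, with the empty (index-$0$) boxes and the deleted inside corner handled separately --- is the natural one and is almost certainly what the cited references do. But as written the proposal is a plan, not a proof: the entire mathematical content is in the ``short but fiddly enumeration'' that you defer, and the one concrete pivot you do state is garbled. The two branches of (\ref{eq:udKP}) read $F_{i,j+1}^{t+1}-F_{i-1,j}^{t+1}=F_{i+1,j+1}^{t}-F_{i,j}^{t}$ and $F_{i,j+1}^{t+1}-F_{i,j}^{t+1}=F_{i,j+1}^{t}-F_{i,j}^{t}$; your version, ``$F_{i,j}^{t+1}-F_{i,j+1}^{t}$ equals either $F_{i-1,j}^{t+1}-F_{i+1,j+1}^{t}$ or $F_{i,j}^{t+1}-F_{i,j+1}^{t}$,'' has the indices shuffled and its second alternative is a tautology, so it cannot carry the argument.

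To close the gap concretely: set $A=F_{i,j+1}^{t+1}-F_{i,j+1}^{t}$, $B=F_{i,j}^{t+1}-F_{i,j}^{t}$, $C=F_{i-1,j}^{t+1}-F_{i-1,j}^{t}$. Since $F_{i+1,j+1}^{t}+F_{i-1,j}^{t}-F_{i,j}^{t}-F_{i,j+1}^{t}=-W_{i,j}^{t}$ with $W_{i,j}^{t}\geq 0$ as in (\ref{prop:change}) and \S\ref{sec:2.2}, equation (\ref{eq:udKP}) is equivalent to $A=\max[\,C-W_{i,j}^{t},\,B\,]$. If the hole starts in row $r_0$ and exits in row $r_1$, and $v_i$ is the entry of the box pulled up across the boundary between rows $i$ and $i+1$ (defined when $r_0\leq i<r_1$), then $B=-[i\geq r_0]+[r_0\leq i<r_1]\cdot[v_i\leq j]$ (Iverson brackets), and analogously for $A$ (with $j+1$) and $C$ (with $i-1$); the term $-[i\geq r_0]$ is exactly the contribution of the deleted inside corner, i.e.\ of the index-$0$ boxes you flag. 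All three quantities lie in $\{-1,0\}$ and $A\geq B$, so the theorem amounts to the equivalence: $A\neq B$ (the crossing box has entry exactly $j+1$) holds precisely when $C-W_{i,j}^{t}>B$. Both implications require the actual sliding rule (``move the box below the hole when it is $\leq$ the box to the right'') together with semistandardness, and neither is performed in your proposal; until they are, and the degenerate cases $i<r_0$ and $i\geq r_1$ are checked, you have reformulated the theorem rather than proved it.
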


\begin{prop}[\cite{iwao2018rims}]
By setting
\begin{equation}\label{prop:change}
\begin{aligned}
&Q_{i,j}^t=F_{i,j}^t+F_{i-1,j}^{t+1}-F_{i-1,j}^t-F_{i,j}^{t+1},\\
&W_{i,j}^t=F_{i,j}^t+F_{i,j+1}^{t}-F_{i-1,j}^t-F_{i+1,j+1}^{t},
\end{aligned}
\end{equation}
we have
\begin{equation}\label{eq:QandW}
\begin{aligned}
&Q_{i+1,j+1}^t=(\min[Q_{i+1,j}^t,W_{i+1,j}^t]-\min[Q_{i,j}^t,W_{i,j}^t])+Q_{i,j}^t,\\
&W_{i,j}^{t+1}=(\min[Q_{i+1,j}^t,W_{i+1,j}^t]-\min[Q_{i,j}^t,W_{i,j}^t])+W_{i,j}^t.
\end{aligned}
\end{equation}
\end{prop}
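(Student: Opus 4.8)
The plan is to verify the two identities in (\ref{eq:QandW}) purely algebraically, treating them as consequences of the tropical KP equation (\ref{eq:udKP}) together with the defining substitutions (\ref{prop:change}). The crucial observation is that both $Q$ and $W$ are, by definition, ``discrete double derivatives'' of the single potential $F$, so the whole computation should collapse once (\ref{eq:udKP}) is brought in. First I would rewrite the tropical KP equation at the index pair $(i,j)$ in the suggestive form
\begin{equation*}
\max\!\left[F_{i+1,j+1}^t+F_{i-1,j}^{t+1},\,F_{i,j+1}^t+F_{i,j}^{t+1}\right]-F_{i,j}^t-F_{i,j+1}^{t+1}=0,
\end{equation*}
and note that subtracting $F_{i,j+1}^{t}+F_{i,j}^{t+1}$ inside the bracket turns the left-hand term into $(F_{i+1,j+1}^t-F_{i,j+1}^t)-(F_{i,j}^{t+1}-F_{i-1,j}^{t+1})$, which is exactly of the shape that appears in the definitions of $W_{i,j}^t$ and $Q_{i,j}^t$. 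Concretely, I would show that (\ref{eq:udKP}) is equivalent to
\begin{equation*}
\min\!\left[W_{i,j}^t,\,Q_{i,j}^t\right]=F_{i,j}^t+F_{i,j+1}^{t+1}-F_{i,j+1}^t-F_{i,j}^{t+1},
\end{equation*}
after unfolding $W_{i,j}^t$ and $Q_{i,j}^t$ via (\ref{prop:change}) and using $-\max[a,b]=\min[-a,-b]$. Call the right-hand side $C_{i,j}^t$; it is the quantity that mediates between the two formulas.

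Next I would establish the first identity of (\ref{eq:QandW}). Expanding $Q_{i+1,j+1}^t$ and $Q_{i,j}^t$ by (\ref{prop:change}) and forming their difference, one gets a combination of eight $F$-values; the claim $Q_{i+1,j+1}^t-Q_{i,j}^t=\min[Q_{i+1,j}^t,W_{i+1,j}^t]-\min[Q_{i,j}^t,W_{i,j}^t]$ then reduces, via the identity $\min[W,Q]=C$ just derived (applied at indices $(i+1,j)$ and $(i,j)$), to a plain linear identity among $F$-values with no $\min$ or $\max$ remaining — this should be a telescoping cancellation that I would check by direct substitution. For the second identity, $W_{i,j}^{t+1}-W_{i,j}^t=\min[Q_{i+1,j}^t,W_{i+1,j}^t]-\min[Q_{i,j}^t,W_{i,j}^t]$ is handled the same way: expand both sides using (\ref{prop:change}) and $\min[W,Q]=C$ at $(i+1,j)$ and $(i,j)$, and verify that everything cancels to a linear $F$-identity. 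In fact the two right-hand sides in (\ref{eq:QandW}) are literally the same expression $C_{i+1,j}^t-C_{i,j}^t$, so the real content is the pair of linear identities $Q_{i+1,j+1}^t-Q_{i,j}^t=C_{i+1,j}^t-C_{i,j}^t$ and $W_{i,j}^{t+1}-W_{i,j}^t=C_{i+1,j}^t-C_{i,j}^t$, which hold as bare algebraic identities in the $F$'s, independently of the KP relation.

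The main obstacle, such as it is, lies in the bookkeeping: making sure the index shifts in (\ref{prop:change}) line up correctly when $Q$ is evaluated at $(i+1,j+1)$ versus $(i,j)$ and $W$ at $(i,j)$ in two different time slices, and that the translation from (\ref{eq:udKP}) to the $\min[W,Q]=C$ form uses the equation at the correct index pair. Once the correct form of the KP relation is pinned down, no genuine inequality manipulation is needed — every $\max$ is removed by a single application of $\min[W,Q]=C$, and what remains is a finite linear identity that I would confirm by writing out the coefficient of each $F_{a,b}^s$ on both sides. I would present the argument by first proving the equivalence of (\ref{eq:udKP}) with $\min[W_{i,j}^t,Q_{i,j}^t]=C_{i,j}^t$ as a lemma-style paragraph, then deriving each line of (\ref{eq:QandW}) in one or two displayed lines.
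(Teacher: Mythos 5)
Your overall strategy---use the tropical KP equation once to replace $\min[Q_{i,j}^t,W_{i,j}^t]$ by an explicit linear combination $C_{i,j}^t$ of the $F$'s, and then verify both lines of (\ref{eq:QandW}) as bare linear identities with common right-hand side $C_{i+1,j}^t-C_{i,j}^t$---is sound, and it is essentially the only reasonable route (the paper itself gives no proof here, deferring to \cite{iwao2018rims}). However, the pivotal formula you state is wrong, and with your $C$ the promised ``telescoping cancellation'' does not occur. After subtracting $F_{i,j+1}^t+F_{i,j}^{t+1}$ inside the bracket, the nonzero entry is $(F_{i+1,j+1}^t-F_{i,j+1}^t)-(F_{i,j}^{t+1}-F_{i-1,j}^{t+1})=Q_{i,j}^t-W_{i,j}^t$, so (\ref{eq:udKP}) becomes
\[
\max\bigl[Q_{i,j}^t-W_{i,j}^t,\,0\bigr]=F_{i,j}^t+F_{i,j+1}^{t+1}-F_{i,j+1}^t-F_{i,j}^{t+1}.
\]
Since $\max[Q-W,0]=Q-\min[W,Q]$, the right-hand side of your displayed lemma equals $Q_{i,j}^t-\min[W_{i,j}^t,Q_{i,j}^t]$, not $\min[W_{i,j}^t,Q_{i,j}^t]$: you have dropped a $Q_{i,j}^t$ and flipped a sign. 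The correct mediating quantity is
\[
C_{i,j}^t:=\min\bigl[W_{i,j}^t,Q_{i,j}^t\bigr]=F_{i,j+1}^t+F_{i-1,j}^{t+1}-F_{i-1,j}^t-F_{i,j+1}^{t+1}.
\]

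With this corrected $C$ the rest of your plan does close exactly as described: a direct expansion shows that $Q_{i+1,j+1}^t-Q_{i,j}^t$, $W_{i,j}^{t+1}-W_{i,j}^t$, and $C_{i+1,j}^t-C_{i,j}^t$ are all equal to the same linear expression
\[
F_{i+1,j+1}^t-F_{i+1,j+1}^{t+1}-F_{i,j}^t+F_{i,j}^{t+1}-F_{i,j+1}^t+F_{i,j+1}^{t+1}+F_{i-1,j}^t-F_{i-1,j}^{t+1},
\]
so both lines of (\ref{eq:QandW}) follow. With your $C$, by contrast, the difference $C_{i+1,j}^t-C_{i,j}^t$ contains a term $F_{i+1,j}^t$ that appears in neither $Q_{i+1,j+1}^t-Q_{i,j}^t$ nor $W_{i,j}^{t+1}-W_{i,j}^t$, so the claimed ``bare algebraic identities in the $F$'s'' are simply false. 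The coefficient check you promised but did not perform would have caught this; as written, the key lemma is incorrect and the proof does not go through without the repair above.
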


\subsection{Skew tableau and matrix $W$}\label{sec:2.2}

For a skew tableau, let $F_{i,j}$ be the number of $1,2,\dots,j$'s contained in the $1^{\mathrm{st}},2^{\mathrm{nd}},\dots,i^{\mathrm{th}}$ rows.
Define
\begin{align*}
W_{i,j}
&=F_{i,j}+F_{i,j+1}-F_{i-1,j}-F_{i+1,j+1}\\
&=\sharp \{1,2,\dots,j\mbox{'s in the } i^{\mathrm{th}} \mbox{ row}\}-\sharp\{1,2,\dots,(j+1)\mbox{'s in the } (i+1)^{\mathrm{th}} \mbox{ row}\}.
\end{align*}
By definition of skew tableaux, $W_{i,j}$ must be non negative.
Moreover, $W_{i,j}$ satisfies the following conditions:
\begin{gather}
\mbox{There exists some $N$ such that $j>N\Rightarrow W_{i,j}=W_{i,j+1}$ for all $i$}.\label{eq:cond1}\\
\mbox{There exists some $d$ such that $i>d\Rightarrow W_{i,j}=0$ for all $j$}.\label{eq:cond2}\\
\textstyle \sum_{p\geq 0}W_{i+p,j+p}\geq \sum_{p\geq 0}W_{i+1+p,j+p}.\label{eq:cond3}
\end{gather}

Let $\Omega$ be the set of skew tableaux.
Define
\begin{equation*}
\begin{aligned}
\mathfrak{X}:=\{(W_{i,j})_{
\kasane{i\geq 1}{j\geq 0 \hfill}
}\,\vert\,W_{i,j}\in \ZZ_{\geq 0},\ \mbox{with conditions (\ref{eq:cond1}), (\ref{eq:cond2}), (\ref{eq:cond3})}
\}.
\end{aligned}
\end{equation*}
Consider the mapping $W:\Omega\to \mathfrak{X}$ which corresponds a skew tableau with the matrix $(W_{i,j})_{i,j}$.
\begin{prop}[\cite{iwao2018rims}]
$W$ is bijective.
\end{prop}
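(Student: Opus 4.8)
The plan is to construct an explicit inverse to $W$. Given $(W_{i,j})_{i\ge 1,\,j\ge 0}\in\mathfrak{X}$, I want to recover a skew tableau $S$ with $W(S)=(W_{i,j})_{i,j}$, and show this recovery is the unique such preimage, so that $W$ is both injective and surjective. The natural bookkeeping device is the associated $F$-matrix: by \eqref{eq:cond2} the entries $W_{i,j}$ vanish for $i$ large, so the telescoping relation $W_{i,j}=F_{i,j}+F_{i,j+1}-F_{i-1,j}-F_{i+1,j+1}$ together with the boundary behavior (the values for $i=0$ are all $0$, and by \eqref{eq:cond1} the $j$-dependence stabilizes) should determine the $F_{i,j}$ uniquely as finite sums of the $W$'s along diagonals. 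Concretely I expect $F_{i,j}$ to be expressible as something like $\sum_{p\ge 0}\big(\text{partial sums of }W\text{ along the diagonal through }(i,j)\big)$, and the first step is to write this formula down and check it is well-defined (the sums are finite by \eqref{eq:cond1}, \eqref{eq:cond2}) and that it genuinely inverts the defining relation for $W_{i,j}$.

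The second step is to read a skew tableau off the matrix $(F_{i,j})_{i,j}$. Since $F_{i,j}-F_{i,j-1}$ counts the number of $j$'s in rows $1,\dots,i$, and $F_{i,j}-F_{i-1,j}$ counts the entries $\le j$ in row $i$, one reconstructs row $i$ by declaring that it contains exactly $(F_{i,j}-F_{i-1,j})-(F_{i,j-1}-F_{i-1,j-1})$ copies of the letter $j$ for each $j\ge 1$, placed in weakly increasing order, with $F_{i,0}-F_{i-1,0}$ leading empty boxes. I then have to verify that the resulting filling is actually a \emph{skew} semistandard tableau: weak increase along rows is automatic from the construction; the shape being a genuine skew diagram $\lambda/\mu$ (rows and the empty-box prefixes nested correctly) and strict increase down columns are exactly what conditions \eqref{eq:cond1}, \eqref{eq:cond2}, \eqref{eq:cond3} encode — in particular \eqref{eq:cond3}, the diagonal inequality $\sum_{p\ge 0}W_{i+p,j+p}\ge\sum_{p\ge 0}W_{i+1+p,j+p}$, is precisely the condition that forces column-strictness, and non-negativity of $W_{i,j}$ forces the shape to be column-convex. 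This translation between the combinatorial tableau axioms and the three conditions on $\mathfrak{X}$ is the technical heart of the argument.

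Finally I close the loop: starting from a skew tableau $S$, forming $W(S)$, then applying the reconstruction above returns $S$ (injectivity), and starting from an arbitrary element of $\mathfrak{X}$, the reconstructed tableau has the prescribed $W$-matrix (surjectivity); both follow once the two displayed formulas — $F$ in terms of $W$ and the tableau in terms of $F$ — are checked to be mutually inverse on the nose. I expect the main obstacle to be the bookkeeping in the second step: verifying that \eqref{eq:cond1}--\eqref{eq:cond3} are exactly equivalent to "the reconstructed filling is a legal skew tableau" requires carefully matching the combinatorial constraints (column strictness at each box, and the inner shape $\mu$ being a partition) against the linear-algebraic conditions, and getting the boundary cases ($j=0$, small $i$) right. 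Once that dictionary is in place the bijectivity is formal. I note this statement is already attributed to \cite{iwao2018rims}, so the proof here can be brief, citing that reference for the routine verifications.
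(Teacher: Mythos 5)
Your skeleton is the right one, and your first step is exactly correct: writing $G_{i,j}=F_{i,j}-F_{i-1,j}$ for the number of boxes in row $i$ indexed $\le j$ (empty boxes counting as $0$'s), the defining relation reads $W_{i,j}=G_{i,j}-G_{i+1,j+1}$, so telescoping along diagonals and using \eqref{eq:cond2} gives the closed inversion formula $G_{i,j}=\sum_{p\ge 0}W_{i+p,j+p}$, and $F_{i,j}=\sum_{i'\le i}G_{i',j}$. The reconstruction of row $i$ as $G_{i,0}$ empty boxes followed by $G_{i,j}-G_{i,j-1}$ copies of each letter $j$ is also the right move, and injectivity is then immediate. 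The gap is in the dictionary you propose for the second step, which you yourself flag as the technical heart. First, the roles are swapped: it is the nonnegativity $W_{i,j}\ge 0$, i.e.\ $G_{i,j}\ge G_{i+1,j+1}$, that encodes column-strictness (together with $\lambda_{i+1}\le\lambda_i$ and $\mu_{i+1}\le\mu_i$, obtained at $j$ large and $j=0$), via the standard interlacing characterization of ``each entry of row $i+1$ strictly exceeds the entry above it''; it is not \eqref{eq:cond3} that does this, and nonnegativity does far more than make the shape column-convex.

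Second, and more seriously, what the reconstruction actually requires of the third condition is that $j\mapsto\sum_{p\ge 0}W_{i+p,j+p}$ be weakly increasing, so that the letter multiplicities $G_{i,j}-G_{i,j-1}$ are nonnegative. Condition \eqref{eq:cond3} as printed compares $\sum_p W_{i+p,j+p}$ with $\sum_p W_{i+1+p,j+p}$, i.e.\ asserts $G_{i,j}\ge G_{i+1,j}$; this is a consequence of the needed monotonicity together with $W\ge 0$, but it does not imply it. Concretely, the array with $W_{1,0}=1$ and all other entries $0$ satisfies nonnegativity and \eqref{eq:cond1}--\eqref{eq:cond3} verbatim, yet reconstructs to a single row with one empty box and ``$-1$ ones,'' so it lies in $\mathfrak{X}$ as defined but not in the image of $W$. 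Hence the equivalence you plan to prove (``\eqref{eq:cond1}--\eqref{eq:cond3} $\iff$ legal skew tableau'') fails as stated: you must either replace the diagonal inequality by the monotonicity $\sum_{p\ge0}W_{i+p,j+p}\ge\sum_{p\ge0}W_{i+p,j-1+p}$ (equivalently $\sum_{p\ge0}W_{i+p,j+p}\ge W_{i,j-1}+\sum_{p\ge0}W_{i+1+p,j+p}$), or show that the printed condition was intended in this stronger form. With that correction your argument closes; without it, surjectivity is false and the ``formal'' final step cannot be completed.
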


We always identify $\Omega$ with $\mathfrak{X}$.
While the matrix $W$ is of infinite size, it is ``essentially finite'' because of (\ref{eq:cond1}) and (\ref{eq:cond2}).
To simplify notations, we often regard $W$ as a matrix of size $d\times N$.

\subsection{Jeu de taquin slide starting from the $k^{\mathrm{th}}$ row}\label{sec:2.3}

Now we construct the map $\varphi_k:\mathfrak{X}\to \mathfrak{X}$ for any positive integer $k$, which is a tropical counterpart of the jeu de taquin slide starting from $k^\mathrm{th}$ row.
Let $W=(W_{i,j})\in \mathfrak{X}$ and $[k]=(0,\dots,0,
\stackrel{\stackrel{k}{\vee}}{1},\allowbreak 0,\dots)$.
The definition of $\varphi_k$ is given as follows~\cite{iwao2018rims}:
\begin{enumerate}
\item Let $\bm{Q}_0=(Q_{1,0},Q_{2,0},\dots):=[k]$.
\item
Compute $Q_{i,j}$, $W_{i,j}^+$ recursively as follows:
If the vector $\bm{Q}_j=(Q_{1,j},Q_{2,j},\dots)$ is already defined for some $j\in \ZZ_{\geq 0}$, define the new vectors $\bm{Q}_{j+1}=(Q_{1,j+1},Q_{2,j+1},\dots)$ and $\bm{W}^+_j=(W^+_{1,j},W^+_{2,j},\dots)$ by the formula
\begin{equation}\label{eq:R-matrix}
\begin{cases}
Q_{i+1,j+1}=(\min[Q_{i+1,j},W_{i+1,j}]-\min[Q_{i,j},W_{i,j}])+Q_{i,j},\\
W_{i,j}^+=(\min[Q_{i+1,j},W_{i+1,j}]-\min[Q_{i,j},W_{i,j}])+W_{i,j},
\end{cases}
\end{equation}
where $Q_{0,j}=0$, $W_{0,j}=+\infty$.
(Compare with (\ref{eq:QandW})).
\end{enumerate}
Define $\varphi_k(W)=(W_{i,j}^+)$.

We regard (\ref{eq:R-matrix}) as a recurrence formula whose inputs are $\bm{Q}_j$ and $\bm{W}_j$, and outputs are $\bm{Q}_{j+1}$ and $\bm{W}^+_j$.
It is convenient to draw a diagram such as 
$
\sayou{ & \bm{W}_j & \\
\bm{Q}_j & \+ & \bm{Q}_{j+1}\\
 & \bm{W}^+_j
}$, where the inputs are written on the left and top sides, and the outputs are on the right and bottom sides.
The whole procedure to calculate $\varphi_k(W)$ is diagrammatically displayed as
\begin{equation}\label{eq:one-jeu-de-taquin-slide}
\sayou{
 & \bm{W}_0 & & \bm{W}_1 & & \bm{W}_2 & & \bm{W}_3 & \\
\bm{Q}_0=[k] & \+ & \bm{Q}_1 & \+ & \bm{Q}_2 & \+ & \bm{Q}_3 & \+ &\cdots\\
 & \bm{W}^+_0 & & \bm{W}^+_1 & & \bm{W}^+_2 & & \bm{W}^+_3 & \\
}.
\end{equation}

Moreover, the map $\varphi_k$ also admits a combinatorial interpretation as follows:
\begin{itemize}
\item Draw a path on the matrix $W=(W_{i,j})$ by the following rule (see \S \ref{example:first}): 
\begin{itemize}
\item The path starts at the $(k,0)^{\mathrm{th}}$ position.
\item When the path reaches at the $(i,j)^{\mathrm{th}}$ position, extend it to the lower right neighbor if $W_{i,j}=0$, or to the right neighbor if $W_{i,j}\neq 0$.
\end{itemize}
\item For each non-zero number $W_{i,j} $ on the path, decrease it by one and increase the number at the upper neighbor by one; $W_{i,j}\mapsto W_{i,j}-1$, $W_{i-1,j}\mapsto W_{i-1,j}+1$.
The matrix given by this procedure coincides with $\varphi_k(W)$.
\item The matrix $Q=(Q_{i,j})_{i,j}$ is given by putting $Q_{i,j}=1$ if the path goes through the $(i,j)^\mathrm{th}$ position, and $Q_{i,j}=0$ otherwise.
\end{itemize}

\subsection{Example}\label{example:first}

The jeu de taquin slide
\[
\Tableau{\bl & \bl & \bl & 2 & 3 \\ \gray & 1 & 3 & 4 \\ 2 & 2 & 4 \\ }
\quad
\Tableau{\bl & \bl & \bl & 2 & 3 \\ 1 & \gray & 3 & 4 \\ 2 & 2 & 4 \\ }
\quad
\Tableau{\bl & \bl & \bl & 2 & 3 \\ 1 & 2 & 3 & 4 \\ 2 & \gray & 4 \\ }
\quad
\Tableau{\bl & \bl & \bl & 2 & 3 \\ 1 & 2 & 3 & 4 \\ 2 & 4 & \gray \\ }
\]
corresponds with the matrices
\[
W=
\left(
\haiti{
 1 & 1 & 1 & 1 & 1 & 1 \\
1\r&0\d& 0 & 0 & 1 & 1 \\
 0 & 0 &2\r&2\r&3\r& 3
}
\right),\quad
Q=
\left(
\haiti{
0 & 0 & 0 & 0 & 0 & 0 \\
1 & 1 & 0 & 0 & 0 & 0 \\
0 & 0 & 1 & 1 & 1 & 1
}
\right).
\]
The matrix $\varphi_k(W)$ is given as
$
\left(
\haiti{
2 & 1 & 1 & 1 & 1 & 1 \\
0 & 0 & 1 & 1 & 2 & 2 \\
0 & 0 & 1 & 1 & 2 & 2
}
\right)
$.

\subsection{Rectification}

Any skew tableau reaches a (non-skew) tableau thorough a sequence of finitely many jeu de taquin slides.
To repeat jeu de taquin slides is nothing but to choose inside corners repeatedly.
By filling numbers in chosen inside corners in decreasing order, one obtains a standard tableau.
For example, if we apply the sequence of jeu de taquin slides to
\[
\Tableau{
\bl & \bl & \bl & 2 & 3\\
\bl & 1 & 3 & 4\\
2 & 2 & 4
}\qquad \mbox{defined by}\qquad
\Tableau{
\lgray 1 & \lgray 2 & \lgray 3\\
\lgray 4
},
\]
we obtain the sequence of skew tableaux
\[
\Tableau{
\bl & \bl & \bl & 2 & 3\\
\gray & 1 & 3 & 4\\
2 & 2 & 4
}\ \to\ 
\Tableau{
\bl & \bl & \gray & 2 & 3\\
1 & 2 & 3 & 4\\
2 & 4
}\ \to\ 
\Tableau{
\bl & \gray & 2 & 3\\
1 & 2 & 3 & 4\\
2 & 4
}\ \to\ 
\Tableau{
\gray & 2 & 2 & 3\\
1 & 3 & 4\\
2 & 4
}\ \to\ 
\Tableau{
1 & 2 & 2 & 3\\
2 & 3 & 4 \\
4\\  
}.
\]
We call the (non-skew) tableau obtained by this procedure the {\it rectified tableau}.
With diagrammatic expressions as in \S \ref{sec:2.3}, this procedure is displayed as
\begin{equation}\label{eq:diagram-of-rectifiction}
\sayou{
&(1,1,0)&&(1,0,0)&&(1,0,2)&&(1,0,2)&&(1,1,3)&\cdots \\
[2]&\+&[2]&\+&[3]&\+&[3]&\+&[3]&\+&[3]& \cdots\\
&(2,0,0)&&(1,0,0)&&(1,1,1)&&(1,1,1)&&(1,2,2)&\cdots \\
[1]&\+&[1]&\+&[1]&\+&[1]&\+&[1]&\+&[1]& \cdots\\
&(1,0,0)&&(0,0,0)&&(0,1,1)&&(0,1,1)&&(0,2,2)&\cdots \\
[1]&\+&[1]&\+&[2]&\+&[2]&\+&[2]&\+&[2]& \cdots\\
&(0,0,0)&&(0,0,0)&&(1,0,1)&&(1,0,1)&&(1,1,2)&\cdots \\
[1]&\+&[2]&\+&[3]&\+&[3]&\+&[3]&\+&[3]& \cdots\\
&(0,0,0)&&(0,0,0)&&(1,1,0)&&(1,1,0)&&(1,2,1)&\cdots \\
}.
\end{equation}
If $[k]$ is placed on the leftmost column, it represents the jeu de taquin slide starting from the $k^\mathrm{th}$ row．
The diagram (\ref{eq:diagram-of-rectifiction}) presents a sequence of jeu de taquin slides starting from the $2^\mathrm{nd}$, $1^\mathrm{st}$, $1^\mathrm{st}$, $1^\mathrm{st}$ rows.
The vectors at the bottom correspond with the rectified tableau.
In fact,
\[
\Tableau{
1 & 2 & 2 & 3\\
2 & 3 & 4 \\
4\\  
}\longleftrightarrow
\left(
\haiti{
0 & 0 & 1 & 1 & 1 & 1 \\
0 & 0 & 1 & 1 & 2 & 2 \\
0 & 0 & 0 & 0 & 1 & 1
}
\right).
\]

\subsection{Associated tableaux}\label{sec:2.6}

For diagrams such as (\ref{eq:diagram-of-rectifiction}), we assign the tableau $\Tableau{t_1}\la \Tableau{t_2}\la \dots\la \Tableau{t_d}$ to a column whose entries are $[t_1],[t_2],\dots,[t_d]$ from top to bottom.
For example, from (\ref{eq:diagram-of-rectifiction}), we induce the new diagram
\begin{equation}\label{eq:diagram-of-rectifiction-tate}
\sayou{
&(1,1,0)&&(1,0,0)&&(1,0,2)&&(1,0,2)&&(1,1,3)&\cdots \\
\Tableau{1&1&1\\2}&\+&\Tableau{1&1&2\\2}&\+&\Tableau{1&2&3\\3}&\+&
\Tableau{1&2&3\\3}&\+&\Tableau{1&2&3\\3}&\+&\cdots\\
&(0,0,0)&&(0,0,0)&&(1,1,0)&&(1,1,0)&&(1,2,1)&\cdots \\
}
\end{equation}
It was proved by Iwao~\cite{iwao2018rims} that if one exchanges the entries in the leftmost column in (\ref{eq:diagram-of-rectifiction}) with $[t'_1],[t'_2],\dots,[t'_d]$ that gives the same tableau, one also obtain the same diagram (\ref{eq:diagram-of-rectifiction-tate}).
This means that the diagram (\ref{eq:diagram-of-rectifiction-tate}) is well-defined independently of (\ref{eq:diagram-of-rectifiction}).
The tableau associated with each column is called the {\it associated tableau}.

\begin{defi}
Let $U(\mu)$ be the tableau of shape $\mu$ where any number in $i^\mathrm{th}$ row is $i$.
\end{defi}

\begin{lemma}[Iwao~\protect{\cite[Lemma 5.5]{iwao2018rims}}, (also see Fulton~\protect{\cite[\S 5.2, Lemma 1]{fulton_1996}})]
The associated tableau of any standard tableau of shape $\mu$ is $U(\mu)$.
\end{lemma}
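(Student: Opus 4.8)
The plan is to recognize the associated tableau as the row‑insertion tableau of a suitable word, to identify that word as a reverse lattice (Yamanouchi) word, and then to invoke the classical description of words whose insertion tableau is $U(\mu)$.

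First I would unwind the construction of \S\ref{sec:2.6}. Let $T$ be standard of shape $\mu$ with $n=|\mu|$ boxes, and write $q(i)$ for the row index of the box of $T$ containing $i$. Read as a rectification order, $T$ prescribes that the box carrying $n$ be vacated first, then the box carrying $n-1$, and so on down to the box carrying $1$ (the corner $(1,1)$), vacated last; and vacating a box lying in row $r$ is exactly a jeu de taquin slide starting from the $r^{\mathrm{th}}$ row. Hence the leftmost column of the associated rectification diagram is $[q(n)],[q(n-1)],\dots,[q(1)]$ from top to bottom, so by the definition of \S\ref{sec:2.6} its associated tableau is $\Tableau{q(n)}\la\Tableau{q(n-1)}\la\cdots\la\Tableau{q(1)}$. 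Since $S\la\Tableau{x}$ is the row‑insertion of the single letter $x$ into $S$, this tableau is precisely the insertion tableau $P(w)$ obtained by successively row‑inserting $q(n-1),q(n-2),\dots,q(1)$ into $\Tableau{q(n)}$, i.e. the insertion tableau of the word $w:=q(n)\,q(n-1)\cdots q(1)$.

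Next I would check that $w$ is a reverse lattice word of content $\mu$. Look at the reversed word $q(1)\,q(2)\cdots q(n)$: its prefix of length $i$ records the rows of the boxes of $T$ filled with $1,2,\dots,i$, and because $T$ is standard these boxes form a Young diagram $\mu^{(i)}\subseteq\mu$; hence the number of $j$'s among $q(1),\dots,q(i)$ equals $\mu^{(i)}_j$, and $\mu^{(i)}_1\ge\mu^{(i)}_2\ge\cdots$ shows that in every prefix the number of $j$'s is at least the number of $(j+1)$'s, while at $i=n$ the total content is $(\mu_1,\mu_2,\dots)=\mu$. So $w$ is a reverse lattice word of content $\mu$, and it is a classical fact (see Fulton~\cite[\S5.2]{fulton_1996}) that any such word has insertion tableau $U(\mu)$ --- equivalently, is Knuth equivalent to the reading word of $U(\mu)$. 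Combined with the first step, this shows the associated tableau of $T$ equals $U(\mu)$.

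I expect the one genuinely delicate point to be the bookkeeping in the first step: one must match the \emph{decreasing} order of the entries of $T$ with the top‑to‑bottom order of the leftmost column, and read $\Tableau{q(n)}\la\cdots\la\Tableau{q(1)}$ left‑associatively, so that the recipe ``insert the single box on the right into the tableau on the left'' turns it into $P(w)$; after that the remaining two steps are routine. If one prefers a self‑contained argument that avoids quoting the lattice‑word fact, the identity $P(w)=U(\mu)$ can instead be proved by induction on $|\mu|$: deleting from $T$ the box carrying $n$ yields a standard tableau of a smaller shape $\mu'$ (a removable corner of $\mu$ being deleted) whose associated tableau is $U(\mu')$ by induction, and associativity of the plactic product reduces the claim to the single‑box identity $\Tableau{q(n)}\cdot U(\mu')=U(\mu)$, which one verifies directly by row insertion.
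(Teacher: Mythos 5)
Your argument is correct and matches the route the paper itself points to: the paper offers no proof here, deferring to \cite[Lemma 5.5]{iwao2018rims} and to Fulton~\cite[\S 5.2, Lemma 1]{fulton_1996}, and your reduction --- identifying the associated tableau with the insertion tableau $P\bigl(q(n)q(n-1)\cdots q(1)\bigr)$ and checking that this word is a reverse lattice word of content $\mu$ --- is precisely the intended use of that cited lemma. Your bookkeeping of the decreasing labelling of inside corners against the top-to-bottom order of the leftmost column agrees with the paper's conventions (compare the example following (\ref{eq:diagram-of-rectifiction})), so no gap remains.
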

\begin{cor}[The uniqueness of rectification]
The rectified tableau of a skew tableau is unique.
\end{cor}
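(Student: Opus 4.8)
The plan is to reduce the statement to two facts already in hand---the well-definedness of the induced diagram (\ref{eq:diagram-of-rectifiction-tate}) from \S\ref{sec:2.6} (Iwao \cite{iwao2018rims}), and the lemma just stated, that the associated tableau of a standard tableau of shape $\mu$ is $U(\mu)$---so that no fresh computation is needed (cf.\ Fulton \cite[\S 5.2]{fulton_1996} for the classical statement). First I would spell out the dictionary between rectifications of a fixed skew tableau $S$ of shape $\lambda/\mu$ and the diagrams of the form (\ref{eq:diagram-of-rectifiction}). Set $n=|\mu|$. A rectification of $S$ is a sequence of $n$ jeu de taquin slides, the $i$-th of which vacates one inside corner; recording these corners with the values $n,n-1,\dots,1$ in the order of vacation produces a standard tableau $T$ of shape $\mu$, and conversely $T$ recovers the sequence. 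Indeed, since a Young diagram has at most one removable cell in each row, the $i$-th slide is the unambiguous jeu de taquin slide $\varphi_{t_i}$ from the $t_i$-th row, where $t_i$ is the row of the entry $n+1-i$ of $T$ (at each step such a corner exists, because the cells of $T$ with value at most a given bound form a Young subdiagram). From the construction of $\varphi_k$ out of the recurrence (\ref{eq:R-matrix}), this rectification is precisely computed by the diagram (\ref{eq:diagram-of-rectifiction}) whose top row encodes $S$ under the bijection $W$ and whose leftmost column is $[t_1],[t_2],\dots,[t_n]$, the rectified tableau being read off from the bottom row.

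With this dictionary in place, let $T$ and $T'$ be two standard tableaux of shape $\mu$, producing leftmost columns $[t_1],\dots,[t_n]$ and $[t_1'],\dots,[t_n']$ and rectified tableaux $R$ and $R'$ respectively. By the lemma just stated, the associated tableau of $[t_1],\dots,[t_n]$ equals $U(\mu)$, and so does that of $[t_1'],\dots,[t_n']$; hence the two leftmost columns have the same associated tableau. The two diagrams (\ref{eq:diagram-of-rectifiction}) also share the same top row, namely the one encoding $S$. Therefore the well-definedness of the induced diagram (\ref{eq:diagram-of-rectifiction-tate}) applies, and the two diagrams induce one and the same diagram (\ref{eq:diagram-of-rectifiction-tate}); in particular they have the same bottom row. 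Reading this back through $W$ gives $R=R'$, which is the uniqueness of the rectified tableau.

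Since the substance lies entirely in the two cited results, the only things needing a little care---neither requiring a new idea---are bookkeeping: (i) the well-definedness statement of \S\ref{sec:2.6} is a purely formal property of the recurrence (\ref{eq:R-matrix}), so it may be invoked for the swapped leftmost column $[t_1'],\dots,[t_n']$ without first checking that this column arises from a legitimate rectification of $S$; and (ii) the bottom row of (\ref{eq:diagram-of-rectifiction-tate})---not just the middle row of associated tableaux---is part of the data that statement controls. Both are immediate from the way (\ref{eq:diagram-of-rectifiction-tate}) is extracted in \S\ref{sec:2.6}, where its top and bottom rows are copied verbatim from (\ref{eq:diagram-of-rectifiction}). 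The point the proof really exploits is that, in (\ref{eq:diagram-of-rectifiction}), only the associated tableau of the leftmost column---and not the column itself---affects the bottom row.
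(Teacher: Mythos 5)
Your proof is correct and follows exactly the route the paper intends: the corollary is stated immediately after the well-definedness of the induced diagram (\ref{eq:diagram-of-rectifiction-tate}) and the lemma that every standard tableau of shape $\mu$ has associated tableau $U(\mu)$, and these two facts combine precisely as you describe to force the bottom rows, hence the rectified tableaux, to agree. Your additional bookkeeping remarks (the dictionary between rectification orders and standard tableaux of shape $\mu$, and the fact that the bottom row is part of the data controlled by the well-definedness statement) are exactly the implicit steps the paper leaves to the reader.
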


\subsection{Lift to $\mathcal{M}$}
All procedures in \S \ref{sec:2.1}--\S \ref{sec:2.6} can be expressed in terms of the language $\mathcal{L}$ and the $\mathcal{L}$-structure $\o{\mathcal{M}}$.
They can be lifted to $\mathcal{M}$ appropriately, and their ``geometric'' counterparts are expressed as a discrete integrable system. 

For example, let
\[
R_j^t:=\left(
\begin{array}{ccccc}
I_{1,j}^t & 1 &  &   \\ 
 & I_{2,j}^t & 1 &  \\ 
 &  & I_{3,j}^t & \ddots   \\ 
 &  &  & \ddots 
\end{array} 
\right),\quad
L_j^t:=\left(
\begin{array}{ccccc}
1 &  &  &   \\ 
-V_{1,j}^t & 1 & &  \\ 
 & -V_{2,j}^t & 1 &    \\ 
 &  & -V_{3,j}^t & 1 \\ 
 &  &  & \ddots &\ddots
\end{array} 
\right)^{-1}.
\]
We can uniquely define the rational map
$$
M^\infty\to M^\infty;\quad (I_{i,j}^t,V_{i,j}^t)_{i=1,2,\dots}\mapsto (I_{i,j+1}^t,V_{i,j}^{t+1})_{i=1,2,\dots}
$$
by the equation
\begin{equation}\label{eq:Laxform}
R_j^tL_j^t=L_j^{t+1}R_{j+1}^{t},
\end{equation}
which is a discrete analog of the Toda equation.
It is verified that its tropicalization coincides with (\ref{eq:QandW}), where $Q_{i,j}^t=\o{I_{i,j}^t}$ and $W_{i,j}^t=\o{V_{i,j}^t}$ are the tropical variables.

All other procedures and facts can also be lifted to $\o{\mathcal{M}}$ and expressed in terms of the discrete integrable systems.
See \cite{iwao2018rims} for details.

\section{Application 1: shape equivalence class}\label{sec:3}

\subsection{Definition of shape equivalence}

The rectification of
\[
\Tableau{\bl & \bl & \bl & 2\\
\bl & 1 & 3 & 6 \\
4 & 5 & 7}\qquad
\mbox{defined by}\qquad
\Tableau{\lgray 1 &\lgray  2 &\lgray 3 \\
\lgray 4}
\]
is expressed as
\[
\Tableau{\bl & \bl & \bl & 2\\
\gray & 1 & 3 & 6 \\
4 & 5 & 7}\ \to \ 
\Tableau{\bl & \bl & \gray & 2\\
1 & 3 & 6 \\
4 & 5 & 7}\ \to \ 
\Tableau{\bl & \gray & 2\\
1 & 3 & 6 \\
4 & 5 & 7}\ \to \ 
\Tableau{\gray & 2 & 6\\
1 & 3 & 7 \\
4 & 5 }\ \to \ 
\Tableau{1 & 2 & 6\\
3 & 5 & 7 \\
4 }.
\]
On the other hand, the rectification of
\[
\Tableau{\bl & \bl & \bl & 1\\
\bl & 2 & 2 & 2 \\
3 & 3 & 4}\qquad
\mbox{defined by}\qquad
\Tableau{\lgray 1 &\lgray  2 &\lgray 3 \\
\lgray 4},
\]
is expressed as
\[
\Tableau{\bl & \bl & \bl & 1\\
\gray & 2 & 2 & 2 \\
3 & 3 & 4}\ \to \ 
\Tableau{\bl & \bl & \gray & 1\\
2 & 2 & 2  \\
3 & 3 & 4}\ \to \ 
\Tableau{\bl & \gray & 1\\
2 & 2 & 2  \\
3 & 3 & 4}\ \to \ 
\Tableau{\gray & 1& 2\\
2 & 2 & 4  \\
3 & 3 }\ \to \ 
\Tableau{1 & 2& 2\\
2 & 3 & 4  \\
3  }.
\]
Note that the shapes of these two sequence coincides with each other.
In such case, two skew tableaux are said to {\it have the same shape changes by } 
$
\Tableau{\lgray 1 &\lgray  2 &\lgray 3 \\
\lgray 4}.
$
The following theorem is referred to as the ``shape change theorem~\cite[Appendix A]{fulton_1996}.''
\begin{thm}[Shape change theorem]\label{thm:shape-change}
If two skew tableaux have the same shape changes by some standard tableau, then they actually have the same shape changes by any standard tableau.
\end{thm}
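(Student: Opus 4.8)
The plan is to promote the sequence of shapes occurring in a rectification to an invariant attached to the skew tableau alone, so that ``$S$ and $S'$ have the same shape changes by $T$'' becomes ``$S$ and $S'$ carry the same invariant''; since that invariant does not see $T$, the conclusion for an arbitrary standard tableau $T'$ of shape $\mu$ is then immediate. So fix skew tableaux $S,S'$ of shape $\lambda/\mu$ with the same shape changes by a standard tableau $T$ of shape $\mu$, and let $T'$ be another standard tableau of shape $\mu$.

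First I would record the shape data tropically. Write $S=S^0,S^1,\dots,S^d$ for the skew tableaux produced by the rectification of $S$ along $T$ (so $S^d$ is the rectified tableau). The planar diagram (\ref{eq:diagram-of-rectifiction}) carries in its successive $\bm{W}$-rows the matrices $W(S^0),\dots,W(S^d)\in\mathfrak{X}$; since $W\colon\Omega\to\mathfrak{X}$ is bijective, the shape of $S^t$ is recovered from the $t$-th $\bm{W}$-row. Its leftmost column is $[k_1],\dots,[k_d]$, where $(k_1,\dots,k_d)$ encodes $T$, and by \cite[Lemma~5.5]{iwao2018rims} this column has associated tableau $U(\mu)$ no matter which standard tableau of shape $\mu$ is used. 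Thus ``$S$ and $S'$ have the same shape changes by $T$'' says precisely that the planar diagrams of $S$ and of $S'$ along $T$ have $\bm{W}$-rows of equal shapes, row by row.

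Next I would carry out the dual of \S\ref{sec:2.6}: instead of bundling the columns of (\ref{eq:diagram-of-rectifiction}) into associated tableaux, bundle the rows, producing the vertical diagram (\ref{eq:diagram-of-rectification-yoko}), from which one reads off an \emph{associated circled array} $\mathcal{A}_T(S)$. The decisive point --- Proposition~\ref{prop:map-of-UI} --- is that $\mathcal{A}_T(S)$ depends only on $S$ and $\mu$, not on $T$. I would prove this by the method of \S\ref{sec:1.3}: the whole construction --- the recurrence (\ref{eq:R-matrix}), the row-bundling, and the extraction of the array --- is governed by $\mathcal{L}$-formulas and lifts to $\mathcal{M}$ in the form (\ref{eq:Laxform}) of the discrete Toda system; over $\mathcal{M}$ one verifies, as the row-wise analogue of the well-definedness of (\ref{eq:diagram-of-rectifiction-tate}) proved in~\cite{iwao2018rims}, that the value of the construction is unchanged when the leftmost column $[k_1],\dots,[k_d]$ is replaced by any $[k'_1],\dots,[k'_d]$ with the same associated tableau; finally the morphism $M\to\o{M}$ transports this equality to $\o{\mathcal{M}}$. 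I expect this well-definedness to be the main obstacle: one must see that the row-bundled data depends on the first column only through its associated tableau.

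Finally I would close the loop. On one hand $\mathcal{A}_T(S)$ is distilled from the shapes appearing in the planar diagram, so it is a function of the shape changes of $S$ by $T$; on the other hand the sequence of shapes is regenerated from the pair $(\mathcal{A}_T(S),T)$, since $T$ reinstates the coordinates that the row-bundling collapsed and (\ref{eq:R-matrix}) then rebuilds the $\bm{W}$-rows. Consequently, the hypothesis that $S$ and $S'$ have the same shape changes by $T$ gives $\mathcal{A}_T(S)=\mathcal{A}_T(S')$; Proposition~\ref{prop:map-of-UI}, applied to $S$ and to $S'$, then yields $\mathcal{A}_{T'}(S)=\mathcal{A}_T(S)=\mathcal{A}_T(S')=\mathcal{A}_{T'}(S')$ for every standard tableau $T'$ of shape $\mu$; and regenerating the shapes from $(\mathcal{A}_{T'}(S),T')=(\mathcal{A}_{T'}(S'),T')$ shows that $S$ and $S'$ have the same shape changes by $T'$ as well, which is the assertion of the theorem.
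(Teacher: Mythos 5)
There is a genuine gap, and it sits at the step you pass over most quickly. Your argument needs three facts: (i) the associated circled array is a function of the shape changes of $S$ by $T$; (ii) the shape changes by $T'$ are regenerated from the circled array together with $T'$; (iii) the circled array does not depend on the standard tableau used. Of these, (ii) is correct (it is the forward determinism of the vertical diagram, i.e.\ Proposition~\ref{prop:map-of-UI}), and (iii) is automatic in the paper's setup: the circled array of $S$ is defined directly from the matrix $W(S)$ via (\ref{eq:F-geometric-tableau}) and Theorem~\ref{thm:geometric-circled-tableau}, with no reference to any rectification, because the top circled array of (\ref{eq:diagram-of-rectification-yoko}) is just the bundling of the top $\bm{W}$-row of the planar diagram, which is $W(S)$ itself. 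So the well-definedness argument you flag as ``the main obstacle'' is not needed. The real obstacle is (i), and your justification for it --- that $\mathcal{A}_T(S)$ ``is distilled from the shapes appearing in the planar diagram'' --- is not true: the row-bundling is applied to the full $\bm{W}$-rows, which encode the entire intermediate skew tableaux, not merely their shapes. Two skew tableaux with the same shape changes have different $\bm{W}$-rows, so nothing in your argument explains why their circled arrays agree; this implication is precisely the nontrivial half (b)$\Rightarrow$(c) of Theorem~\ref{thm:main1}.

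The paper closes this gap with two ingredients absent from your proposal. First, Lemma~\ref{lemma:emptyrow}: the associated circled array of a (non-skew) tableau is $\bm{\emptyset}$, so the bottom of the vertical diagram (\ref{eq:diagram-of-rectification-yoko}) is always $\bm{\emptyset}$. Second, the diagram is read ``upside-down'': by the existence and uniqueness in Proposition~\ref{prop:map-of-UI}, the elementary cell can be run with the bottom circled array and the right-hand entry $[a]$ as inputs, producing the top circled array and the left-hand entry $[b]$ as outputs (combinatorially, the reverse slide). Feeding in $\bm{\emptyset}$ at the bottom and the sequence $a_1,\dots,a_k$ of rows in which outside corners disappear --- which is exactly the shape-change data --- reconstructs the top circled array, proving Corollary~\ref{cor:cir} and hence (i). With that supplied, your final paragraph does assemble correctly into the theorem.
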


If two skew tableaux have the same shape changes by some standard tableau (therefore, if they have the same shape changes by any standard tableau), they are said to be {\it shape equivalent}.

\begin{example}\label{example:shape-equivalent}
If we apply the sequence of jeu de taquin slides defined by 
$
\Tableau{\lgray 1 &\lgray  3 &\lgray 4 \\
\lgray 2}
$
to the two skew tableaux above, we have
\[
\Tableau{\bl & \bl & \gray & 2\\
\bl & 1 & 3 & 6 \\
4 & 5 & 7}\ \to \ 
\Tableau{\bl & \gray & 2 & 6\\
\bl & 1 & 3  \\
4 & 5 & 7}\ \to \ 
\Tableau{\bl & 1 & 2 & 6\\
\gray & 3 & 7  \\
4 & 5 }\ \to \ 
\Tableau{\gray & 1 & 2 & 6\\
3 & 5 & 7  \\
4  }\ \to \ 
\Tableau{ 1 & 2 & 6\\
3 & 5 & 7  \\
4  }
\]
and 
\[
\Tableau{\bl & \bl & \gray & 1\\
\bl & 2 & 2 & 2 \\
3 & 3 & 4}\ \to \ 
\Tableau{\bl & \gray & 1 & 2\\
\bl & 2 & 2  \\
3 & 3 & 4}\ \to \ 
\Tableau{\bl & 1 & 2 & 2\\
\gray & 2 & 4  \\
3 & 3 }\ \to \ 
\Tableau{\gray & 1 & 2 & 2\\
2 & 3 & 4  \\
3  }\ \to \ 
\Tableau{1 & 2 & 2\\
2 & 3 & 4  \\
3  }.
\]
\end{example}

\subsection{Circled array}

In the sequel, we will give a proof of Theorem \ref{thm:shape-change}.

For real vectors $I=(I_1,I_2,\dots)$ and $V=(V_1,V_2,\dots)$, we define the matrices $E(I)$, $F(V)$ of infinite size as 
\[
E(I)=\left(
\begin{array}{cccc}
I_1 & 1 &  &  \\ 
 & I_2 & 1 &  \\ 
 &  & I_3 & \ddots \\ 
 &  &  & \ddots
\end{array} 
\right),\quad
F(V)=\left(
\begin{array}{cccc}
1 &  &  &  \\ 
-V_1 & 1 &  &  \\ 
 & -V_2 & 1 &  \\ 
 &  & \ddots & \ddots
\end{array} 
\right).
\]
Moreover, for a real infinite vector $V$, define the matrix $F_k(V)$ by
\[
F_k(V)=\left(
\begin{array}{cc}
\mathrm{Id}_{k-1} &  \\ 
 & F(V)
\end{array} 
\right).
\]

Consider the map $M^\infty\to M^\infty;(V_{i,j})\mapsto (U_{i,j})$ uniquely defined by the equation
\begin{equation}\label{eq:F-geometric-tableau}
F(V_N)\cdots F(V_1)F(V_0)=F_{N+1}(U_{N+1})\cdots F_2(U_2)F_1(U_1).
\end{equation}
By induction on $N$, we can verify that this map is expressed by $\mathcal{L}$-terms (\S \ref{sec:1.3}).
\begin{rem}
This map can be regarded as an ``$F$-matrix version'' of Noumi-Yamada's geometric tableau.
In fact, the geometric tableau $(I_{i,j})\mapsto (J_{i,j})$ is defined by the equation 
\[
E(I_\ell)\cdots E(I_2)E(I_1)=E_{1}(J_1)E_2(J_2)\cdots E_\ell(J_\ell),
\]
where $E_k(J)$ is a matrix analogously defined to $F_k(V)$.
For details, see \cite[Section 5]{iwao2018rims}.
\end{rem}

Let $W_{i,j}=\o{V_{i,j}}$ and $L_{i,j}=\o{U_{i,j}}$ be the tropical variables.
Through the tropicalization, we obtain the piecewise linear map $\o{M}^\infty\to \o{M}^\infty;\{W_{i,j}\}\mapsto \{L_{i,j}\}$.
This map has a interesting combinatorial interpretation which we explain below.

A {\it circled array} is a collection of finitely many rows that consists of {\it circled numbers}, where the numbers are arranged in increasing ordering.
(Empty rows are allowed.)
For example, $
\Tableau{
\maru{1} & \maru{2} & \maru{2} & \maru{3}\\
\maru{1} & \maru{1} & \maru{3} \\
\emp\\
\maru{2}
}$ is a circled array.
It consists of $4$ rows, one of which is empty.
We define the action of an {\it one-rowed array} (e.g.~
$
\Tableau{1&1&2&3}
$)
to a circled array, which we will express as
\[
\Tableau{
\maru{1} & \maru{2} & \maru{2} & \maru{3}\\
\maru{1} & \maru{1} & \maru{3} \\
\emp\\
\maru{2}
}\leftharpoonup
\Tableau{1&1&2&3},
\]
by the following manner:
\begin{enumerate}
\item The action is calculated row by row.
Arrange the members of the $1^\mathrm{st}$ row of the circled array and the members of the one-rowed array in the following order: circled $1$'s, boxed $1$'s, circled $2$'s, boxed $2$'s,\dots.
For the example above, we have
\[
\Tableau{\maru{1}&1&1&\maru{2}&\maru{2}&2&\maru{3}&3}.
\]
\item Move the circles according to the time evolution rule of the {\it Takahashi-Satsuma box ball system}~\cite{takahashi1990soliton}.
(See \S \ref{sec:appB}.)
Here a circled number is regarded as a ``box with a ball,'' and a boxed number is regarded as an ``empty box.''
We neglect all the balls that go out from the right end.
\[
\Tableau{ & \maru{}& & & & \maru{}& & \maru{}}
\]
\item Number the circles and boxes according to the following rules: 
(i) Decrease each number by one at where a circle is replaced with a box.
(ii) The numbers at the remaining places do not change.
(iii) Delete boxed $0$'s. 
\[
\Tableau{\hbox to 0pt{\hspace{-5pt}\scalebox{1.6}{$\diagup$}} 0& \maru{1}& 1& 1& 1& \maru{2}& 2& \maru{3}}
\]
\item 
The sequence of circled numbers in the diagram (e.g.~$\Tableau{\maru{1}&\maru{2}&\maru{3}}$) is the $1^\mathrm{st}$ row of the new circled array.
The sequence of boxed numbers (e.g.~$\Tableau{1&1&1&2}$) proceeds to the $2^\mathrm{nd}$ row.
\item Repeat recursively the same procedures to $2^\mathrm{nd},3^\mathrm{rd},\dots$ rows  until the one-rowed array reaches to the bottom of the array.
Finally, the one-rowed array is added to the bottom end as a sequence of circled numbers.
\end{enumerate}
In the exemplified case, the action is calculated as
\[
\Tableau{
\maru{1} & \maru{2} & \maru{2}&\maru{3}\\
\maru{1} & \maru{1} & \maru{3}\\
\emp\\
\maru{2}
}\quad
\raise 18pt\hbox {$\leftharpoonup \Tableau{1&1&2&3}$}
\qquad
\Tableau{
\maru{1} & \maru{2} & \maru{3} \\
\maru{1} & \maru{1} & \maru{3} \\
\emp\\
\maru{2}
}\quad
\raise 6pt\hbox {$\leftharpoonup \Tableau{1&1&1&2}$}
\qquad
\Tableau{
\maru{1} & \maru{2} & \maru{3} \\
\maru{1} & \maru{1} \\
\emp\\
\maru{2}
}
\raise -6pt\hbox {$\leftharpoonup \Tableau{1&2&2}$}
\]
\[
\Tableau{
\maru{1} & \maru{2} & \maru{3} \\
\maru{1} & \maru{1} \\
\emp\\
\maru{2}
}
\raise -18pt\hbox {$\leftharpoonup \Tableau{1&2&2}$}
\qquad\qquad
\Tableau{
\maru{1} & \maru{2} & \maru{3} \\
\maru{1} & \maru{1} \\
\emp\\
\maru{2}
}
\raise -30pt\hbox {$\leftharpoonup \Tableau{1&1&2}$}
\qquad
\Tableau{
\maru{1} & \maru{2} & \maru{3} \\
\maru{1} & \maru{1} \\
\emp\\
\maru{2}\\
\maru{1}&\maru{1}&\maru{2}}
\]
Let
\[
x_1 \leftharpoonup x_2 \leftharpoonup x_3\leftharpoonup\cdots.
\]
denote the circled array given by acting the one-rowed arrays $x_1,x_2,x_3,\dots$ to the empty array.

\begin{thm}\label{thm:geometric-circled-tableau}
Let $\bm{W}_j=(W_{1,j},W_{2,j},\dots)$ be a sequence of nonnegative integers that satisfies $\sum_{i}W_{i,j}<\infty$.
Let $w_j$ denote the one-rowed array that consists of $W_{1,j}$ $1$'s, $W_{2,j}$ $2$'s,\dots.
Then the map $(W_{i,j})\mapsto (L_{i,j})$ has the following combinatorial interpretation:
The number of $i$'s in the $j^\mathrm{th}$ row of the circled array
\[
w_N \leftharpoonup w_{N-1} \leftharpoonup \cdots \leftharpoonup w_0
\]
equals to $L_{i,j}$.
Here $N$ is a sufficiently large integer.
\end{thm}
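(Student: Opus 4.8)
The plan is to establish the combinatorial interpretation by induction on $N$, matching the algebraic recursion encoded in the matrix identity (\ref{eq:F-geometric-tableau}) against the box-ball combinatorics defining the action $\leftharpoonup$. The base case $N$ with $\bm W_0 = \bm W_1 = \cdots = 0$ is trivial: both sides produce the empty array, equivalently $L_{i,j} = 0$ for all $i,j$. For the inductive step, the key observation is that multiplying by a single factor $F(V_N)$ on the left of (\ref{eq:F-geometric-tableau}) corresponds precisely to appending one more one-rowed array $w_N$ to the front of the chain $w_{N-1}\leftharpoonup\cdots\leftharpoonup w_0$. So I would reformulate the statement as: if $(U_{i,j})$ is obtained from $(V_{i,j})_{i\ge1,\,0\le j\le N-1}$ via (\ref{eq:F-geometric-tableau}) and $(U'_{i,j})$ is obtained from $(V_{i,j})_{i\ge1,\,0\le j\le N}$, then $\o{U'}$ is obtained from $\o{U}$ by one application of $\leftharpoonup w_N$ in the box-ball sense.

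The heart of the argument is therefore a \emph{single-step} lemma: the tropicalization of the map
\[
F(V_N)\,\bigl(F_{N}(U_N)\cdots F_1(U_1)\bigr) = F_{N+1}(U'_{N+1})\cdots F_1(U'_1)
\]
reproduces, row by row, exactly steps (1)--(5) of the action of a one-rowed array on a circled array. I would prove this by peeling off the factors from the right: commuting $F(V_N)$ past $F_1(U_1)$ and collecting the new $F_1(U'_1)$ isolates the interaction of the incoming row with the first row of the current array, and the tropicalization of this two-factor computation is a $\min$-plus identity that one checks to coincide with the box-ball carry rule. Here I would invoke the well-known fact (essentially the content of \S\ref{sec:appB}, and the same mechanism already used in \cite{iwao2018rims}) that the Takahashi--Satsuma time evolution on a single row is the tropicalization of the elementary $LR$-type matrix relabeling; the ``carry'' that passes from the first row to the second is exactly the boxed-number residue in step (4). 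Iterating over rows $j = 1, 2, \dots$ matches the recursion in (5), and the termination (for $N$ large, the incoming row eventually passes through all nonempty rows and is appended at the bottom) matches the fact that $F_k(V) = \mathrm{Id}$ once $k$ exceeds the number of nonzero rows, by conditions (\ref{eq:cond1})--(\ref{eq:cond2}).

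The main obstacle I anticipate is the bookkeeping in the single-step lemma: one must set up a clean dictionary between the entries $\o{U_{i,j}}$ (read as ``number of $i$'s in row $j$'' of the circled array) and the box-ball configuration, and verify that the $\min$/$+$ formula coming from $F(V_N)\cdot F_1(U_1) = F_1(U'_1)\cdot(\text{carry})$ is literally the Takahashi--Satsuma update together with the decrement-and-delete relabeling of steps (3)--(4). This is a finite, explicit $2\times 2$-block computation in the semifield $\o{\mathcal M}$, but getting the indexing of ``circled vs.\ boxed'' numbers to align with the row index $i$ requires care; once that dictionary is fixed, the induction closes mechanically. A secondary point worth a remark is well-definedness: since the map $(V_{i,j})\mapsto(L_{i,j})$ is given by $\mathcal L$-terms and the box-ball action manifestly depends only on the multiset data $\bm W_j$, the integrality and finiteness hypotheses $\sum_i W_{i,j}<\infty$ guarantee all the $\min$'s are attained and the combinatorial side is well-posed.
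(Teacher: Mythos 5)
Your core mechanism is the right one, and it is exactly the one the paper uses: the two-factor relation $F_k(U)F_k(V)=F_{k+1}(V')F_k(U')$ is the Lax form of the discrete Toda equation, its tropicalization is the Takahashi--Satsuma step, and the full identity (\ref{eq:F-geometric-tableau}) is resolved as a cascade of such swaps, one per row of the circled array. However, your induction is oriented the wrong way, and as stated the inductive step is false. In the chain $w_N\leftharpoonup w_{N-1}\leftharpoonup\cdots\leftharpoonup w_0$ the one-rowed arrays act in order from left to right, so $w_N$ acts \emph{first} and $w_0$ acts \emph{last}; correspondingly, the last-acting array contributes the \emph{rightmost} factor $F(V_0)$ of the product $F(V_N)\cdots F(V_0)$. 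Your step claims that multiplying by $F(V_N)$ on the \emph{left} realizes one application of $\leftharpoonup w_N$ to the already-built array $w_{N-1}\leftharpoonup\cdots\leftharpoonup w_0$; but that operation computes $(w_{N-1}\leftharpoonup\cdots\leftharpoonup w_0)\leftharpoonup w_N$, in which $w_N$ acts last, not $w_N\leftharpoonup w_{N-1}\leftharpoonup\cdots\leftharpoonup w_0$, in which it acts first. These differ in general (already for two one-element rows the two orders give different circled arrays), so the inductive step does not prove the theorem. Your single-step lemma is also internally inconsistent: with $F(V_N)$ placed on the left it is adjacent to $F_N(U_N)$, so you cannot begin by ``commuting $F(V_N)$ past $F_1(U_1)$''; the incoming row must meet the first row of the circled array first, which forces the new factor to enter the product from the right.

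The repair is precisely the paper's argument: induct on the number of factors by appending the new factor $F(V_b)$ on the \emph{right} of $F_j(U^{(j)})\cdots F_1(U^{(1)})$ and pushing it leftward via $F_k(U^{(k)})F_k(V_{b_{k-1}})=F_{k+1}(V_{b_k})F_k(U^{(k)'})$ for $k=1,2,\dots$. Each such swap is one row of the combinatorial rule --- the residue $V_{b_k}$ is exactly the carry of boxed numbers passed to row $k+1$ in your step (4) --- and the final surviving factor $F_{j+1}(U^{(j+1)'})$ is step (5), the leftover appended at the bottom. With that orientation fixed, the dictionary and bookkeeping you describe close as intended, and your argument coincides with the paper's proof.
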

\begin{proof}
Let $V,V',U,U'\in M^\infty$ be vectors of infinite length. 
For any $k$, there uniquely exists the map $M^\infty\to M^\infty;(U,V)\mapsto (U',V')$ defined by
\[
F_k(U)F_k(V)=F_{k+1}(V')F_k(U'),
\]
which is obviously equivalent to $F_1(U)F_1(V)=F_{2}(V')F_1(U')$.
This equation is noting but the Lax formulation of the discrete Toda equation (see, for example, ).
It is well known that its tropical counterpart is the Takahashi-Satsuma box-ball system.
In this context, the action $w_a\leftharpoonup w_b$ can be seen as the tropical counterpart of the equation $F(V_a)F(V_b)=F_{2}(U^{(2)})F_1(U^{(1)})$, where $V_a,V_b$ associate with $w_a,w_b$ and $U^{(i)}$ associated with the $i^\mathrm{th}$ row of the circled array.
Generally, the action $(w_{a_1}\leftharpoonup w_{a_2}\leftharpoonup\cdots w_{a_j})\leftharpoonup w_b$ is the counterpart of
\begin{align*}
&F_{j}(U^{(j)})\cdots F_3(U^{(3)})F_2(U^{(2)})\underline{F_{1}(U^{(1)})F(V_b)}\\
&=F_{j}(U^{(j)})\cdots F_3(U^{(3)})\underline{F_2(U^{(2)})F_2(V_{b_1})}F_1(U^{(1)'})\\
&=F_{j}(U^{(j)})\cdots \underline{F_3(U^{(3)})F_3(V_{b_2})}F_2(U^{(2)'})F_1(U^{(1)'})\\
&=\cdots
=F_{j+1}(U^{(j+1)'})\cdots F_3(U^{(3)'})F_2(U^{(2)'})F_1(U^{(1)'}),
\end{align*}
where the actions $F_k(U^{(k)})F_k(V_{b_{k-1}})=F_{k+1}(V_{b_k})F_k(U^{(k)'})$ are underlined.
This proves the relation between (\ref{eq:F-geometric-tableau}) and the circled array.
\end{proof}


\subsection{Associated circled array}

Let $N$ be a sufficiently large number.
It is verified that one can uniquely define the map $M^\infty\to M^\infty;(V_i,I)_{i=0,1,\dots}\mapsto (V'_i,I')_{i=0,1,\dots}$ by the equation 
\begin{equation*}
F(V_N')\cdots F(V_1')F(V_0')E(I)=E(I')F(V_N)\cdots F(V_1)F(V_0).
\end{equation*}
It is also verified that this map can be expressed by $\mathcal{L}$-terms.
The tropicalization $\o{M}^\infty \to \o{M}^\infty;(W_i,Q)_{i=0,1,\dots}\mapsto (W'_i,Q')_{i=0,1,\dots}$ corresponds with the diagram
\[
\sayou{
 & \bm{W}_0 & & \bm{W}_1 & & \bm{W}_2 & & \bm{W}_N & \\
(\bm{Q}=)\bm{Q}_0 & \+ & \bm{Q}_1 & \+ & \bm{Q}_2 & \+ & \cdots & \+ &\bm{Q}_{N+1}(=\bm{Q}')\\
 & \bm{W}'_0 & & \bm{W}'_1 & & \bm{W}'_2 & & \bm{W}'_N & \\
}.
\]

Let us introduce the new variables $U_{i,j},U_{i,j}'$ by
\begin{gather*}
F(V_N)\cdots F(V_0)=F_{N+1}(U_{N+1})\cdots F_1(U_1),\\
F(V_N')\cdots F(V_0')=F_{N+1}(U_{N+1}')\cdots F_1(U_1').
\end{gather*}
Then we obtain
\begin{gather}\label{eq:def-of-UI}
F_{N+1}(U_{N+1}')\cdots F_2(U_2')F_1(U_1')E(I)=E(I')F_{N+1}(U_{N+1})\cdots F_2(U_2)F_1(U_1).
\end{gather}
\begin{prop}\label{prop:map-of-UI}
There exists the map $M^\infty\to M^\infty;(U_i,I)\mapsto (U_i',I')$ that is uniquely defined by (\ref{eq:def-of-UI}).
Moreover, this map is expressed by $\mathcal{L}$-terms.
\end{prop}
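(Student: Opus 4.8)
\emph{Strategy.} The plan is to solve the matrix equation $(\ref{eq:def-of-UI})$ constructively, by commuting the (known) matrix $E(I)$ across the product $F_{N+1}(U_{N+1})\cdots F_1(U_1)$ one factor at a time. Each such elementary commutation is a copy of the discrete Toda step already used in $(\ref{eq:R-matrix})$ and $(\ref{eq:Laxform})$, hence a bijective map given by $\mathcal{L}$-terms; chaining $N+1$ of them produces the output $(U_i',I')$ together with a presentation of the sought map as a composition of $\mathcal{L}$-term maps. Uniqueness is then settled by a short triangularity argument, so that what we construct is indeed \emph{the} map determined by $(\ref{eq:def-of-UI})$.

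\emph{The elementary move.} Fix $k\geq 1$ and consider
\begin{equation}\label{pf:local}
F_k(U')\,E(J)=E(J')\,F_k(U)\qquad(U,U',J,J'\in M^\infty).
\end{equation}
I claim $(\ref{pf:local})$ determines $(J',U')$ uniquely and by $\mathcal{L}$-terms in $(J,U)$. For $k=1$, rewriting $(\ref{pf:local})$ as $E(J')^{-1}F_1(U')=F_1(U)E(J)^{-1}$ exhibits the known matrix $F_1(U)E(J)^{-1}$ as a product of an upper triangular matrix and a lower bidiagonal unitriangular one; this factorization is unique and subtraction-free, being precisely the discrete Toda step behind $(\ref{eq:Laxform})$–$(\ref{eq:QandW})$ (see \cite{iwao2018rims}). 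For $k\geq 2$ the matrix $F_k$ equals the identity on the first $k-1$ coordinates, so $(\ref{pf:local})$ is the $k=1$ relation with an identity block prepended, and the same conclusion follows.

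\emph{Construction.} Given $(U_i,I)$, put $J^{(0)}:=I$ and, for $k=1,\dots,N+1$, apply $(\ref{pf:local})$ to $(J^{(k-1)},U_k)$ to obtain $(J^{(k)},U_k')$ with $F_k(U_k')E(J^{(k-1)})=E(J^{(k)})F_k(U_k)$; then set $I':=J^{(N+1)}$. Telescoping these identities,
\[
F_{N+1}(U_{N+1}')\cdots F_1(U_1')\,E(I)=F_{N+1}(U_{N+1}')\cdots F_2(U_2')\,E(J^{(1)})\,F_1(U_1)=\cdots=E(I')\,F_{N+1}(U_{N+1})\cdots F_1(U_1),
\]
which is exactly $(\ref{eq:def-of-UI})$. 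Thus a solution exists, and since $(U_i,I)\mapsto(U_i',I')$ is the composition of the $N+1$ $\mathcal{L}$-term maps of $(\ref{pf:local})$, it is expressed by $\mathcal{L}$-terms.

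\emph{Uniqueness and the main obstacle.} If $(U_i',I')$ and $(\widetilde U_i',\widetilde I')$ both satisfy $(\ref{eq:def-of-UI})$ for the same $(U_i,I)$, write $B:=F_{N+1}(U_{N+1}')\cdots F_1(U_1')$, $\widetilde B:=F_{N+1}(\widetilde U_{N+1}')\cdots F_1(\widetilde U_1')$, $A:=F_{N+1}(U_{N+1})\cdots F_1(U_1)$; these banded infinite matrices and their inverses are well defined entrywise. From $BE(I)=E(I')A$ and $\widetilde BE(I)=E(\widetilde I')A$ one gets $E(\widetilde I')E(I')^{-1}=\widetilde B B^{-1}$; the right-hand side is lower triangular with unit diagonal, while the left-hand side is upper triangular with diagonal $(\widetilde I'_i/I'_i)_i$, so $\widetilde I'=I'$ and then $\widetilde B=B$. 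Reading off the columns of $B$ recursively (the first subdiagonal entries in column $j$ yield $U_{1,j}',\dots,U_{N+1,j}'$ by successive quotients, once the earlier columns are known) shows $(U_k')\mapsto B$ is injective, whence $(\widetilde U_i')=(U_i')$. Hence $(\ref{eq:def-of-UI})$ has a unique solution and the construction presents it by $\mathcal{L}$-terms. The one substantive point is the elementary move $(\ref{pf:local})$: one must check that for every $k$ it really reduces to the bijective, subtraction-free discrete Toda step of \cite{iwao2018rims} (in particular that the prepended identity block for $k\geq 2$ causes no loss); the telescoping, the triangularity argument, and the injectivity lemma are then routine bookkeeping with banded infinite matrices.
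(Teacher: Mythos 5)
Your proposal is correct and follows essentially the same route as the paper: reduce (\ref{eq:def-of-UI}) to the elementary commutation $F_k(U')E(I)=E(I')F_k(U)$, which is the subtraction-free discrete Toda step, and chain $N+1$ copies of it. The only difference is that you spell out the global uniqueness via the triangularity of $E(\widetilde I')E(I')^{-1}=\widetilde B B^{-1}$, a point the paper leaves as ``straightforward calculation.''
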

\begin{proof}
It is enough to prove the existence and uniqueness of the map $M^\infty\to M^\infty;(U,I)\mapsto (U',I')$ such that $F_k(U')E(I)=E(I')F_k(U)$ for any $k$, which is easily verified by straightforward calculation.
Obviously, this map is expressed by $\mathcal{L}$-terms.
\end{proof}

From Proposition \ref{prop:map-of-UI}, the map $(U_i,I)\mapsto (U_i',I')$ induces a tropical map $(L_i,Q)\mapsto (L'_i,Q')$, where $L_{i,j}=\o{U_{i,j}}$ and $Q_{i,j}=\o{I_{i,j}}$.
We write this map diagrammatically as
\[
\sayou{ & (\bm{L}_1,\dots,\bm{L}_{N+1}) & \\
\bm{Q}& \+ & \bm{Q}'\\
& (\bm{L}'_1,\dots,\bm{L}'_{N+1})&
}.
\]
From Theorem \ref{thm:geometric-circled-tableau}, the data $(\bm{L}_1,\dots,\bm{L}_{N+1})$ naturally corresponds with a circled array, which we will call the {\it associated circled array}.
Because the data $(\bm{L}_1,\dots,\bm{L}_{N+1})$ is determined by a skew tableau ($\simeq$ an array $W$), there exists a natural correspondence from the set of skew tableaux to the set of circled arrays.
\begin{example}
The skew tableau
\[
\Tableau{
\bl & \bl & \bl & 1 \\
\bl & 2 & 2 & 2\\
3 & 3 & 4
},\quad
W=\left(
\haiti{
2&0&0&0&0\\
1&1&2&1&1\\
0&0&0&2&3
}
\right)
\]
corresponds with the circled array $(N=4)$
\[
\Tableau{2&3&3&3}\leftharpoonup
\Tableau{2&3&3}\leftharpoonup
\Tableau{2&2}\leftharpoonup
\Tableau{2}\leftharpoonup
\Tableau{1&1&2}
=\Tableau{\maru{2}\\\maru{1}\\\maru{1}&\maru{1}\\\emp\\\emp}
\left(
=\Tableau{\maru{2}\\\maru{1}\\\maru{1}&\maru{1}}
\right)
.
\]
To simplify notations, we often omit to write empty rows at the bottom of an array.
We let the sign ``$\bm{\emptyset}$'' denote the array that consists of empty rows.
\end{example}

For example, the rectification of
\[
\Tableau{
\bl & \bl & \bl & 1 \\
\bl & 2 & 2 & 2\\
3 & 3 & 4
}\qquad\mbox{defined by}\qquad
\Tableau{
\lgray 1 & \lgray 2 & \lgray 3\\
\lgray 4
}
\]
is expressed by the following diagram with circled arrays:
\begin{equation}\label{eq:diagram-of-rectification-yoko}
\sayou{
&\Tableau{\maru{2}\\\maru{1}\\\maru{1}&\maru{1}}&\\
[2]&\+&[2]\\
&\Tableau{\maru{1}\\\emp\\\maru{1}&\maru{1}}&\\
[1]&\+&[1]\\
&\Tableau{\emp\\\emp\\\maru{1}&\maru{1}}&\\
[1]&\+&[3]\\
&\Tableau{\emp\\\emp\\\maru{1}}&\\
[1]&\+&[3]\\
&\Tableau{\bl\bm{\emptyset}}&
}.
\end{equation}

\subsection{Proof of Shape change theorem}

We often look the diagrams ``upside-down.''
In other words, we look the data at right and bottom as inputs and at top and left as outputs.
Combinatorially, this corresponds with the {\it reverse slide}~\cite[\S 1.2]{fulton_1996}.
As its name suggests, the reverse slide is the reverse operation of the jeu de taquin slide.


\begin{lemma}\label{lemma:emptyrow}
The associated circled array of any (non-skew) tableau is $\bm{\emptyset}$.
\end{lemma}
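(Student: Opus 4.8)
The plan is to show that a non-skew tableau $T$ of shape $\mu$ produces, under the map of Theorem \ref{thm:geometric-circled-tableau}, the empty circled array. First I would translate the statement into the language of the $W$-matrix: if $T$ is a genuine (non-skew) tableau, then in its $W$-matrix the entries are concentrated in a way that makes the one-rowed arrays $w_0, w_1, \dots, w_N$ feed into the box-ball dynamics without leaving any ball behind in any row. Concretely, I would recall that for a non-skew tableau the $i^{\mathrm{th}}$ row contains only numbers $\geq i$, so the $W_{i,j}$ vanish for $j < i$, and the whole configuration is ``as left-justified as possible.'' The circled array we must compute is $w_N \leftharpoonup w_{N-1} \leftharpoonup \cdots \leftharpoonup w_0$, and I claim each row of the resulting array is empty.

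The cleanest route, I think, is not to run the box-ball system by hand but to exploit the $\mathcal{L}$-formula characterization: by Theorem \ref{thm:geometric-circled-tableau} the circled array is governed by (\ref{eq:F-geometric-tableau}), i.e.\ by the factorization
\[
F(V_N)\cdots F(V_1)F(V_0)=F_{N+1}(U_{N+1})\cdots F_2(U_2)F_1(U_1),
\]
and the associated circled array is empty precisely when $\overline{U_{i,j}}=0$ for all $i,j$, equivalently when each $F_k(U_k)$ tropicalizes to the identity. So the plan is: first, lift the statement to $\mathcal{M}$ and show the $U$'s are (tropically) trivial for the $V$'s coming from a non-skew tableau. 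I would do this by an induction that mirrors the uniqueness-of-rectification circle of ideas (Corollary after Lemma on associated tableaux, \cite[\S 5.2, Lemma 1]{fulton_1996}): a non-skew tableau $T$ is the rectification of itself, and in the planar diagram (\ref{eq:diagram-of-rectifiction}) computing the rectification of a non-skew tableau, the associated tableaux stabilize immediately, which forces the vertical/"$F$-matrix" data to be trivial. Then tropicalizing gives the empty circled array.

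An alternative, more self-contained argument I would keep in reserve: argue directly on the box-ball system. A non-skew tableau has $W$-matrix for which the total "ball content" of $w_j$ — namely $\sum_i W_{i,j}$ — is a weakly decreasing function of $j$ stabilizing for large $j$, and moreover the $j^{\mathrm{th}}$ one-rowed array $w_j$ consists entirely of numbers $\geq$ (the row index where row $j$ of the tableau starts). Acting $\leftharpoonup w_j$ on the circled array built so far, the balls inserted into row $r$ are, by the column-strict/row-weak conditions, always ``overtaken'' or pushed out to the right rather than accumulating; one shows by induction on the number of rows processed that after $\leftharpoonup w_0$ is absorbed the whole array is again empty. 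This is essentially the tropical shadow of the matrix identity above, so the two approaches are the same in disguise.

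The main obstacle is the second step in either route: pinning down precisely \emph{why} a non-skew tableau forces triviality. In the matrix picture this means checking that when all $V$'s come from a non-skew tableau the factorization (\ref{eq:F-geometric-tableau}) is the trivial one $F(V_N)\cdots F(V_0)$ itself (i.e.\ $F_k(U_k)=\mathrm{Id}$), which I expect reduces — after the change of variables already set up in \S\ref{sec:2.6} and the lemma that the associated tableau of a standard tableau of shape $\mu$ is $U(\mu)$ — to a short bookkeeping argument: the non-skewness is exactly the condition $W_{i,j}=0$ for $j<i$, and one verifies this property is preserved and "already sorted" under the $F_k$-factorization, so no reshuffling occurs. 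I would present the argument in the matrix form, invoking Proposition \ref{prop:map-of-UI} and Theorem \ref{thm:geometric-circled-tableau}, and only sketch the box-ball translation as a remark.
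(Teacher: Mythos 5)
There is a genuine gap. You correctly isolate the one fact about non-skew tableaux that the proof needs --- $W_{i,j}=0$ for $i>j$, so that the one-rowed array $w_j$ is supported on indices $\le j$ --- but neither of your two routes actually converts this into a proof that $w_N\leftharpoonup\cdots\leftharpoonup w_0=\bm{\emptyset}$. The missing step is the inductive invariant: one must show, by backward induction on $j$, that \emph{every number appearing in the partial circled array} $w_N\leftharpoonup\cdots\leftharpoonup w_j$ \emph{is at most} $j$. Granting this, at $j=0$ all entries would have to be $\le 0$, hence the array is empty. The induction step is where the box-ball rule actually enters (circled numbers decrease by one exactly where a ball is replaced by an empty box, and the incoming array $w_j$ only contributes numbers $\le j$), and your phrase ``always overtaken or pushed out to the right rather than accumulating'' gestures at this without stating or verifying it; as written, ``after $\leftharpoonup w_0$ is absorbed the whole array is again empty'' presumes the array was empty at intermediate stages, which is false (the intermediate arrays are nonempty in general --- only the bound on their entries improves as $j$ decreases). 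Note also a sign slip: $w_j$ consists of numbers $\le j$, not ``$\ge$ the row index where row $j$ starts.''

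Your first route has a more structural problem. The associated circled array is the \emph{row}-bundled data of the diagram, while the associated tableaux of \S 2.6 are the \emph{column}-bundled data; stability of the latter does not force triviality of the former, and for a non-skew tableau there is no rectification diagram at all (no slides are performed), so there is nothing to ``stabilize.'' The circled array must be computed directly from the $W$-matrix via (\ref{eq:F-geometric-tableau}). Moreover, the intended conclusion at the geometric level is not that $F_k(U_k)=\mathrm{Id}$ in (\ref{eq:F-geometric-tableau}) --- the identity matrix corresponds to $V_i=0$, whose tropicalization is not $0$ --- but only that $\o{U_{i,k}}=0$ for all $i,k$, which is precisely the tropical statement and is most easily obtained combinatorially, as the paper does. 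So the matrix ``bookkeeping'' you defer to is not shorter than the combinatorial induction; it is the same induction in disguise, and it is the part that is missing.
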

\begin{proof}
Let $W=(W_{i,j})$ be the matrix associated with a (non-skew) tableau, and $w_j$ be the one-rowed array associated with the vector $\bm{W}_j=(W_{1,j},W_{2,j},\dots)$.
Since any tableaux contains no empty box, the relation $i>j\Rightarrow W_{i,j}=0$ holds.
Especially, the array $w_j$ consists of numbers lower than or equal to $j$.
For sufficiently large $N$, one can verify by backward induction on $j\leq N$ that all numbers in the circled array
\[
w_N\leftharpoonup w_{N-1}\leftharpoonup\dots \leftharpoonup w_j
\]
are lower than or equal to $j$.
Hence the circled array $w_N\leftharpoonup w_{N-1}\leftharpoonup\dots \leftharpoonup w_0$ consists of empty rows.
\end{proof}

\begin{prop}
Assume that in some tableau, new empty boxes arise in $b_k^\mathrm{th},b_{k-1}^\mathrm{th},\allowbreak\dots,b_1^\mathrm{th}$ rows when a sequence of reverse slides starting from  $a_k^{\mathrm{th}},a_{k-1}^{\mathrm{th}},\dots,a_1^{\mathrm{th}}$ rows are operated.
Then the sequence $b_1,b_2,\dots,b_k$ depends only on the sequence $a_1,a_2,\allowbreak\dots,a_k$, and is independent of the choice of tableaux.
\end{prop}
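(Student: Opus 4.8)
The plan is to realize a whole sequence of reverse slides as one fixed, deterministic map applied to a datum that, by Lemma~\ref{lemma:emptyrow}, does not depend on which tableau one started from. \emph{First}, I would fix the dictionary. Since the reverse slide is by definition the inverse of the jeu de taquin slide~\cite[\S 1.2]{fulton_1996}, a sequence of reverse slides applied to a non-skew tableau $T$ is encoded by a diagram of the form~(\ref{eq:diagram-of-rectification-yoko}) read ``upside-down'': the data on the right and the bottom are the inputs, the data on the left and the top are the outputs. In this reading each of the $k$ blocks computes one reverse slide --- the one-row datum $[a]$ entering its right slot records ``slide in from the outside corner in row $a$,'' the datum $[b]$ leaving its left slot records ``the new empty box appears in row $b$,'' and the circled arrays running down the central column are the associated circled arrays of the intermediate skew tableaux (Theorem~\ref{thm:geometric-circled-tableau}). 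That a block computes a genuine single-valued operation in this direction is the reverse-slide mirror of the forward correspondence of \S~\ref{sec:2}; and because the vertical diagram~(\ref{eq:diagram-of-rectification-yoko}) is determined independently of the underlying planar diagram --- which is exactly what Proposition~\ref{prop:map-of-UI} provides --- the input a block consumes is no more than its circled array together with the datum $[a]$.

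\emph{Next}, I would compose. Stacking the $k$ blocks gives a single map $\Phi$ --- a composition of $k$ copies of the (backward reading of the) tropical map induced by Proposition~\ref{prop:map-of-UI} --- whose input is the associated circled array of the bottom tableau together with $[a_1],\dots,[a_k]$, and whose output is the associated circled array of the top tableau together with $[b_1],\dots,[b_k]$. Here the bottom tableau is the chosen non-skew tableau $T$, and by Lemma~\ref{lemma:emptyrow} its associated circled array equals $\bm{\emptyset}$, whatever $T$ is. Hence $\Phi$ is always fed the datum $(\bm{\emptyset},[a_1],\dots,[a_k])$, which depends on $a_1,\dots,a_k$ alone; since $\Phi$ is a fixed map, its output, in particular the sequence $b_1,\dots,b_k$, depends on $a_1,\dots,a_k$ alone. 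Theorem~\ref{thm:shape-change} then follows at once.

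The step I expect to be the main obstacle is the combinatorial dictionary of the first paragraph: one must verify carefully that ``a reverse slide from row $a$ producing a new empty box in row $b$'' corresponds exactly to ``$[a]$ in on the right, $[b]$ out on the left'' of a single block, and that stacking blocks top-to-bottom matches performing the reverse slides in the prescribed order. This is the mirror of the forward statements of \S~\ref{sec:2} and is forced by the reverse slide being the inverse of the jeu de taquin slide, but the row indices and the ordering convention must be tracked explicitly. (Only $k$ slides occur, so a single sufficiently large value of the auxiliary integer $N$ of~(\ref{eq:def-of-UI}) serves throughout, and no ``growing width'' issue arises.)
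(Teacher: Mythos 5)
Your proposal is correct and is essentially the paper's own argument: read the vertical diagram (\ref{eq:diagram-of-rectification-yoko}) upside-down, observe via Lemma \ref{lemma:emptyrow} that the circled-array input is always $\bm{\emptyset}$, and conclude that the fixed (backward) map determined by Proposition \ref{prop:map-of-UI} sends $(\bm{\emptyset},[a_1],\dots,[a_k])$ to data including $[b_1],\dots,[b_k]$. The paper states this in three sentences; your version merely spells out the composition of blocks and the well-definedness points that the paper leaves implicit.
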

\begin{proof}
The situation is expressed by a diagram such as (\ref{eq:diagram-of-rectification-yoko}) where the entries of the left column are $[b_1],\dots,[b_k]$ and the entries of the right column are $[a_1],\dots,[a_k]$.
When we see the diagram ``upside-down,'' the sequence $a_1,\dots,a_k$ and $\bm{\emptyset}$ (see Lemma \ref{lemma:emptyrow}) are thought of the inputs.
This implies that $b_1,\dots,b_k$ depends only on $a_1,\dots,a_k$.
\end{proof}

\begin{example}
When a sequence of reverse slides starting from $(2,1,2,4,3)^{\mathrm{th}}$ rows operated to any tableau, empty boxes must arise in $(1,1,2,3,2)^\mathrm{th}$ rows.
For example, see
\[
\Tableau{
1&1\\
2&\gray\\
3}\ \to\ 
\Tableau{
\bl&1&\gray\\
1&2\\
3}\ \to\ 
\Tableau{
\bl&\bl&1\\
1&2&\gray\\
3}\ \to\ 
\Tableau{
\bl&\bl&1\\
\bl&1&2\\
3\\
\gray}\ \to\ 
\Tableau{
\bl&\bl&1\\
\bl&1&2\\
\bl&\gray\\
3}\ \to\ 
\Tableau{
\bl&\bl&1\\
\bl&\bl&2\\
\bl&1\\
3}
\]
and
\begin{align*}
&
\Tableau{
1&3&4&9&10\\
2&5&8&\gray\\
6&7\\
11
}\ \to\ 
\Tableau{
\bl&1&3&4&10&\gray\\
2&5&8&9\\
6&7\\
11
}\ \to\ 
\Tableau{
\bl&\bl&1&3&4&10\\
2&5&8&9&\gray\\
6&7\\
11
}\ \to\ \\
&\Tableau{
\bl&\bl&1&3&4&10\\
\bl&2&5&8&9\\
6&7\\
11&\gray
}\ \to\ 
\Tableau{
\bl&\bl&1&3&4&10\\
\bl&2&5&8&9\\
\bl&7&\gray\\
6&11
}\ \to\ 
\Tableau{
\bl&\bl&1&3&4&10\\
\bl&\bl&5&8&9\\
\bl&2&7\\
6&11
}.
\end{align*}
\end{example}

\begin{cor}\label{cor:cir}
The associated circled array of a skew tableau is determined by the sequence $a_1,a_2,\dots,a_k$.
\end{cor}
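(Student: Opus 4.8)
The plan is to read the statement off from the proof of the preceding proposition (that the rows $b_i$ depend only on the $a_i$), simply by observing that the associated circled array is carried along as one more component of the output. Let $T$ be a skew tableau obtained from a (non-skew) tableau $U$ by the sequence of reverse slides considered there --- the ones starting from the $a_k^{\mathrm{th}},a_{k-1}^{\mathrm{th}},\dots,a_1^{\mathrm{th}}$ rows --- and let the new empty boxes appear successively in the $b_k^{\mathrm{th}},\dots,b_1^{\mathrm{th}}$ rows. Exactly as there, the whole process is displayed by a diagram of the shape (\ref{eq:diagram-of-rectification-yoko}): by the construction of the associated circled array (Theorem \ref{thm:geometric-circled-tableau}) the associated circled array of $T$ occupies the top row, that of $U$ occupies the bottom row, the left column carries $[b_1],\dots,[b_k]$, and the right column carries $[a_1],\dots,[a_k]$. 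By Proposition \ref{prop:map-of-UI} this diagram is the tropicalization of a map given by $\mathcal{L}$-terms.

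First I would read the diagram ``upside-down,'' i.e.\ in the direction of the reverse slides. Then its inputs are the datum at the bottom of (\ref{eq:diagram-of-rectification-yoko}) together with the right column --- namely the associated circled array of $U$ together with $[a_1],\dots,[a_k]$ --- while its outputs are the top datum and the left column. Since $U$ is a (non-skew) tableau, Lemma \ref{lemma:emptyrow} forces its associated circled array to be $\bm{\emptyset}$; hence the bottom datum equals $\bm{\emptyset}$ regardless of which $T$ (equivalently, which $U$) we started from. Consequently the inputs of the upside-down map depend only on $a_1,\dots,a_k$, and so do all of its outputs --- in particular the associated circled array of $T$, which is the top datum. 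Keeping only the left column recovers the preceding proposition; keeping the top row is the assertion of the corollary.

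The only point requiring care is the legitimacy of reading the diagram ``upside-down'': one must know that the map attached to a single jeu de taquin step, hence the whole composite over all $k$ steps and all auxiliary rows, is invertible, so that the bottom-and-right data genuinely determine the top-and-left data. Algebraically this amounts to solving the elementary relation $F_k(U')E(I)=E(I')F_k(U)$ for $(U,I)$ given $(U',I')$ --- i.e.\ running the discrete Toda lattice backward in time; combinatorially it is the classical fact (Fulton \cite[\S 1.2]{fulton_1996}) that the reverse slide is a two-sided inverse of the jeu de taquin slide. Granting this, the argument above goes through, and I do not expect a further obstacle: the ``upside-down'' reading is already the mechanism used in the proof of the preceding proposition, and the present corollary just extracts one additional component of its output.
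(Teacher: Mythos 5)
Your proposal is correct and is essentially the paper's own argument: the paper's proof is just ``It suffices to see the diagram (\ref{eq:diagram-of-rectification-yoko}),'' meaning exactly the upside-down reading from the preceding proposition with $\bm{\emptyset}$ (Lemma \ref{lemma:emptyrow}) and $[a_1],\dots,[a_k]$ as inputs, now extracting the top datum instead of the left column. Your extra remark on the invertibility of each step (reverse slide being the inverse of the jeu de taquin slide) makes explicit a point the paper leaves implicit, but it is the same route.
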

\begin{proof}
It suffices to see the diagram (\ref{eq:diagram-of-rectification-yoko}).
\end{proof}

Then we have the following theorem which contains the shape change theorem:
\begin{thm}\label{thm:main1}
For two skew tableaux of the same shape, the following $($a--c$)$ are equivalent: 
\begin{enumerate}
\def\theenumi{\alph{enumi}}
\item They are shape equivalent.
\item They have the same shape changes by some standard tableau.
\item Their associated circled arrays coincide with each other.
\end{enumerate}
\end{thm}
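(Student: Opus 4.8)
The plan is to prove the cycle of implications (a)$\Rightarrow$(b)$\Rightarrow$(c)$\Rightarrow$(a), reading (a) in its a priori strongest form (``the same shape changes by \emph{every} standard tableau of shape $\mu$''), so that Theorem~\ref{thm:shape-change} is not presupposed but instead falls out at the end as the composite (b)$\Rightarrow$(a). Here $\lambda/\mu$ denotes the common shape of the two skew tableaux $S,S'$ and $k=|\mu|$. The implication (a)$\Rightarrow$(b) is immediate: a standard tableau of shape $\mu$ always exists (fill $\mu$ in row-reading order; if $\mu=\emptyset$ everything is vacuous).

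For (b)$\Rightarrow$(c) I would first set up the dictionary between ``shape changes by $T$'' and the boundary data of the diagram $(\ref{eq:diagram-of-rectification-yoko})$. Once a standard tableau $T$ of shape $\mu$ is fixed, the sequence of inside corners removed during the rectification is determined by $T$ alone (removing the boxes of $T$ in decreasing order of their labels always leaves a Young diagram), so the inner shapes $\mu=\mu^{(0)}\supsetneq\mu^{(1)}\supsetneq\cdots\supsetneq\mu^{(k)}=\emptyset$ are independent of the skew tableau being rectified. Hence, for two skew tableaux of shape $\lambda/\mu$, ``same shape changes by $T$'' is equivalent to passing through the same outer shapes $\lambda=\lambda^{(0)}\supsetneq\cdots\supsetneq\lambda^{(k)}$, and since $\lambda$ is fixed this is the same as: the rows in which $\lambda$ loses a box at each step coincide. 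Those rows are precisely the rows $a_1,\dots,a_k$ from which the reverse slides start when the rectification is undone, i.e.\ the entries of the right-hand column of $(\ref{eq:diagram-of-rectification-yoko})$. By Corollary~\ref{cor:cir} the associated circled array of a skew tableau depends only on $a_1,\dots,a_k$; therefore $S$ and $S'$ have the same associated circled array.

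For (c)$\Rightarrow$(a), fix \emph{any} standard tableau $T$ of shape $\mu$ and form the diagram $(\ref{eq:diagram-of-rectification-yoko})$ for the rectification of $S$ by $T$, and likewise for $S'$. In both diagrams the left column is the sequence $[s_1],\dots,[s_k]$ of rows from which the slides of $T$ start, which is determined by $T$ and hence the same for $S$ and $S'$; the top circled array is the associated circled array, the same by hypothesis (c). The diagram represents a composite of the single-valued maps of Proposition~\ref{prop:map-of-UI}, with the data on the top and left sides as input, so its output — in particular the right-hand column recording the outer-shape changes $\lambda^{(0)}\supsetneq\cdots\supsetneq\lambda^{(k)}$ — is likewise the same for $S$ and $S'$. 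By the dictionary above this says $S$ and $S'$ have the same shape changes by $T$; as $T$ was arbitrary, $S$ and $S'$ are shape equivalent. Theorem~\ref{thm:shape-change} is then the composite (b)$\Rightarrow$(c)$\Rightarrow$(a).

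I expect the only real work to be in making the dictionary of the second paragraph precise: one must verify that the left column of $(\ref{eq:diagram-of-rectification-yoko})$ genuinely equals the $T$-determined start-row sequence (independent of the skew tableau), and that the right column records exactly the sequence of outer shapes $\lambda^{(i)}$ (equivalently the reverse-slide start rows $a_i$). Everything else is bookkeeping: the implication (c)$\Rightarrow$(a) uses only that the diagram is a well-defined function of its top-and-left inputs, whereas the more delicate ``upside-down'' reading — reconstructing the top circled array from the reverse-slide rows $a_i$ together with the empty array at the bottom (Lemma~\ref{lemma:emptyrow}) — is already carried out inside Corollary~\ref{cor:cir}.
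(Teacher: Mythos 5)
Your proposal is correct and follows essentially the same route as the paper: (a)$\Rightarrow$(b) trivially, (b)$\Rightarrow$(c) via Corollary~\ref{cor:cir} (the circled array depends only on the outside-corner rows $a_1,\dots,a_k$), and (c)$\Rightarrow$(a) by reading the diagram (\ref{eq:diagram-of-rectification-yoko}) as a function of its top-and-left inputs. Your explicit remark that (a) must be read as ``every standard tableau'' so that Theorem~\ref{thm:shape-change} emerges as the composite (b)$\Rightarrow$(c)$\Rightarrow$(a) is exactly the paper's intent, just stated more carefully.
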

\begin{proof}
(a)$\Rightarrow$(b) is obvious.
For (b)$\Rightarrow$(c), assume that two skew tableaux admit a sequence of jeu de taquin slides starting from  $b_1^{\mathrm{th}},b_2^{\mathrm{th}},\dots,b_k^{\mathrm{th}}$ rows, and reach to tableaux where outside corners in $a_1^{\mathrm{th}},a_2^{\mathrm{th}},\dots,a_k^{\mathrm{th}}$ rows has been changed.
Since their associated circled arrays depend on $a_i$'s only (Corollary \ref{cor:cir}), they coincide with each other.
For (c)$\Rightarrow$(a), consider the diagram such as (\ref{eq:diagram-of-rectification-yoko}).
Since the circled arrays at the top are same, the changes that occur outside corners by the sequence of jeu de taquin also coincide.
\end{proof}

From Theorem \ref{thm:main1}, it can be said that the shape equivalent class of a skew tableau is completely determined by its associated circled array.

\begin{example}
\def\v{\vdots}
Both associated circled arrays of the skew tableaux
$
\Tableau{\bl & \bl & \bl & 2\\
\bl & 1 & 3 & 6 \\
4 & 5 & 7}
$
and 
$
\Tableau{\bl & \bl & \bl & 1\\
\bl & 2 & 2 & 2 \\
3 & 3 & 4}
$
in Example $\ref{example:shape-equivalent}$ coincide with
$\Tableau{\maru{2}\\\maru{1}\\\maru{1}&\maru{1}}$.
\end{example}

\begin{rem}
There exists an alternative way to determine the shape equivalent class by calculating the ``$Q$-tableau'' ~\cite[\S AppendixA]{fulton_1996}.
This can be calculated easily, but essentially depends on the shape of the skew tableau.
The circled array, however, depends only on the order of reverse slides (Corollary \ref{cor:cir}).
It would be said that the circled array presents ``universal'' information of the shape equivalent class.
\end{rem}

\section{Application 2: Littlewood-Richardson correspondence}\label{sec:4}

As we have seen in the previous sections, the diagram of rectification (\ref{eq:diagram-of-rectifiction}) induces a new useful diagram if one gets its rows or columns together in one bundle.
If we do both procedures, we obtain the following diagram:
\[
\sayou{&(\bm{L}_1,\dots,\bm{L}_{N+1})&\\
(\bm{P}_1,\dots,\bm{P}_k)&\+&(\bm{P}'_1,\dots,\bm{P}'_k)\\
&(\bm{L}'_1,\dots,\bm{L}'_{N+1})&
}.
\]
This diagram defines the one-to-one correspondence $(L_i,P_i)\leftrightarrow (L_i',P_i')$.
For example, from the diagram (\ref{eq:diagram-of-rectification-yoko}), we have
\[
\sayou{
&\Tableau{\maru{2}\\\maru{1}\\\maru{1}&\maru{1}}&\\
\Tableau{1&1&1\\2}&\+&\Tableau{1&3&3\\2}\\
&\bm{\emptyset}&
}.
\]

In general, for any two Young diagrams $\mu\subset \lambda$, we have
\begin{equation}\label{eq:diagram-of-rectification-compact}
\sayou{
&M& \\
U(\mu)&\+&Z\\
&\bm{\emptyset}&
},
\end{equation}
where $M$ is a circled array and $Z$ is a tableau of shape $\mu$.
We will see that the correspondence is related closely with the {\it Littlewood-Richardson correspondence}~\cite[\S A.1]{fulton_1996}.

\subsection{Definition of the Littlewood-Richardson correspondence}\label{sec:4.1}

We review the definition of the Littlewood-Richardson correspondence.
For the definitions of the terms {\it RSK correspondence} and {\it $P$ tableau}, see the standard textbook~\cite{fulton_1996}.

Similar to the case of jeu de taquin slides, repeating reverse slides is nothing but choosing outside corners repeatedly.
In other words, this is equivalent to specify a standard skew tableau ``sticking'' outside of the tableau.
For example, the sequence of reverse slides to
\[
X=
\Tableau{
1 & 3 & 3 & 4\\
2 & 4 \\
5\\
\bl
}
\qquad\mbox{defined by}\qquad
R=
\Tableau{
\bl & \bl & \bl &\bl & \gray 5\\
\bl & \bl & \gray 1 & \gray 2  \\
\bl & \gray 4\\
\gray 3
}
\]
results the skew tableau
\[
S=
\Tableau{
\bl&\bl&\bl&1&3\\
\bl&\bl&4&4\\
2&3\\
5
}.
\]
There exists a useful correspondence between sequences of reverse slides and {\it tableaux pairs}.
Here we give its outline briefly according to Fulton \cite[\S Appendix A]{fulton_1996}.

Let $R$ be the standard skew tableau sticking outside of the tableau $X$.
For arbitrary tableau $V_\circ$ of shape $\lambda$, one can find a word $w=t_1t_2\dots t_m$ ($m=\vert \mu \vert$) and a tableau $U$ of shape $\mu$ such that
\begin{itemize}
\item $t_1\to t_2\to\cdots \to t_m\to U=V_\circ$,
\item when $t_{m-i}$ is column-bumped, the new box evokes at the place of $\Tableau{\gray i}$
\end{itemize}
by ``reverse column bumping algorithm.'' 
Let $T=P(w)$ denote the $P$-tableau associated with $w$.
We call the pair $[T,U]$ the {\it Littlewood-Richardson pair}.

Let $\mathcal{S}(\lambda/\mu,X)$ be the set of skew tableaux of shape $\lambda/\mu$, the rectification of which is $X$.
On the other hand, let $\mathcal{T}(\lambda,\mu,V_\circ)$ denote the set of the pairs $[T,U]$ where $T$ is of shape $\lambda$, $U$ is of shape is $\mu$, and $T\cdot U=V_\circ$.
Let 
\[
\mathcal{S}(\lambda/\mu,X)\to \mathcal{T}(\lambda,\mu,V_\circ);\qquad S\mapsto [T,U]
\]
be the map that associates the skew tableau $S$, given from $X$ through a sequence of reverse slides defined by $R$, with the Littlewood-Richardson pair $[T,U]$.
It is known that this map is bijective \cite[\S 5.1, Proposition 2]{fulton_1996}.
This map is called the {\it Littlewood-Richardson rule}.
Moreover, for arbitrary two tableaux $X,Y$ of the same shape, one can define the bijection
\[
\mathcal{S}(\lambda/\mu,X)\to \mathcal{T}(\lambda,\mu,V_\circ)\to \mathcal{S}(\lambda/\mu,Y).
\]
This bijection $\mathcal{S}(\lambda/\mu,X)\leftrightarrow\mathcal{S}(\lambda/\mu,Y)$ is called the {\it Littlewood-Richardson correspondence by $V_\circ$}.
Later we will see that it is independent of the choice of $V_\circ$.

\begin{example}
Let us take $X,R,S$ as in the beginning of \S \ref{sec:4.1}.
If one put
$V_\circ=
\Tableau{
1&1&2&2&2\\
2&3&4&4\\
3&4\\
4}
$, the Littlewood-Richardson pair associated with $S$ is
\[
[T,U]
=\left[
\Tableau{
1&3&3\\
4&4},
\Tableau{
1&2&2&2\\
2&4\\
4}
\right].
\]
\end{example}

\subsection{Tropical interpretation of Littlewood-Richardson correspondence}

As we have seen in the previous subsection, the Littlewood-Richardson correspondence is essentially defined via the standard skew tableau $R$.
Let $\mathcal{R}(\lambda,X)$ be the set of standard skew tableaux $R$ sticking outside of $X$, where the outside of $R$ is of shape $\lambda$.
Therefore, we have the commutative diagram:
\[
\xymatrix{
&\mathcal{R}(\lambda,X)\ar_f[dl]\ar^g[dr]&\\
\mathcal{S}(\lambda/\mu,X)\ar^{\cong}[rr]& &\mathcal{T}(\lambda,\mu,V_\circ)
},
\]
where $f$ is the reverse column bumping and $g$ is a composition of $f$ and the Littlewood-Richardson rule.

Note that the equation $f(R)=S$ is equivalent to the diagram 
$$
\sayou{
&\bm{W}_0& &\bm{W}_1& &\bm{W}_2& \cdots & \bm{W}_N&  \\
U(\mu)&\+& U_1 &\+& U_2&\+& \cdots &\+& Z=(\mbox{the associated tableau of $R$})\\
&\bm{W}'_0& &\bm{W}'_1& &\bm{W}'_2& \cdots & \bm{W}'_N&
},
$$ 
where the matrices $W=(\bm{W}_j)$ and $W'=(\bm{W}'_j)$ correspond with $S$ and $X$ respectively. 
By introducing the equivalence relation 
\[
R_1\sim R_2\iff \mbox{the associated tableaux of $R_1$ and $R_2$ are same}
\]
over $\mathcal{R}(\lambda,X)$, we induce the two one-to-one correspondences
\[
(\mathcal{R}(\lambda,X)/\sim) \leftrightarrow \mathcal{S}(\lambda/\mu,X),\qquad
(\mathcal{R}(\lambda,X)/\sim) \leftrightarrow \mathcal{T}(\lambda,\mu,V_\circ).
\]
Because the shapes of $U(\mu)$ and $Z$ are same, each element of $\mathcal{R}(\lambda,X)/\sim$ can be identified with its associated tableau of shape $\mu$.
If $\mathcal{T}(\mu)$ denotes the set of tableaux of shape $\mu$, we finally obtain the maps
\[
F:\mathcal{S}(\lambda/\mu,X)\hookrightarrow \mathcal{T}(\mu),\qquad
G:\mathcal{T}(\lambda,\mu,V_\circ)\hookrightarrow \mathcal{T}(\mu),
\]
where 
\[
F(S)=G([T,U])\iff \mbox{$[T,U]$ is the Littlewood-Richardson pair associated with $S$.}
\]

From this relation, we obtain a new characterization of the Littlewood-Richardson correspondence.
\begin{thm}\label{thm:main2}
Let $X,Y$ be two tableau of same shapes.
The two skew tableaux $S_1\in \mathcal{S}(\lambda/\mu,X)$ and $S_2\in \mathcal{S}(\lambda/\mu,Y)$ correspond with each other if and only if $F(S_1)=F(S_2)$.
Especially, the Littlewood-Richardson correspondence does not depend on the choice of $V_\circ$.
\end{thm}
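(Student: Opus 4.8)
The plan is to deduce the theorem directly from the two injections $F$ and $G$ constructed just above, together with the relation recorded there: $F(S)=G([T,U])$ holds precisely when $[T,U]$ is the Littlewood-Richardson pair associated with $S$. First I would unwind the meaning of ``$S_1$ and $S_2$ correspond.'' By construction the Littlewood-Richardson correspondence by $V_\circ$ is the composite $\mathcal{S}(\lambda/\mu,X)\to\mathcal{T}(\lambda,\mu,V_\circ)\to\mathcal{S}(\lambda/\mu,Y)$ of the two Littlewood-Richardson rules, so $S_1$ and $S_2$ correspond exactly when they are assigned the same pair $[T,U]\in\mathcal{T}(\lambda,\mu,V_\circ)$ by the Littlewood-Richardson rules attached to $X$ and to $Y$ respectively; equivalently, $[T,U]$ is simultaneously the Littlewood-Richardson pair of $S_1$ and of $S_2$.

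For the ``only if'' direction I would take the common Littlewood-Richardson pair $[T,U]$ of $S_1$ and $S_2$ and apply the relation above once with $X$ and once with $Y$ in the role of the rectified tableau, observing that $G$ is assembled from $V_\circ$ alone---through the reverse column bumping of $V_\circ$ recorded by the standard skew tableau $R$---and is therefore literally the same map $\mathcal{T}(\lambda,\mu,V_\circ)\to\mathcal{T}(\mu)$ in both cases; this gives $F(S_1)=G([T,U])=F(S_2)$. For the ``if'' direction, suppose $F(S_1)=F(S_2)$ and let $[T_i,U_i]$ be the Littlewood-Richardson pair of $S_i$; then $G([T_1,U_1])=F(S_1)=F(S_2)=G([T_2,U_2])$, and injectivity of $G$ forces $[T_1,U_1]=[T_2,U_2]$, so $S_1$ and $S_2$ are carried to the same pair and hence correspond. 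This settles the equivalence.

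Independence of $V_\circ$ is then immediate: the equivalence just proved characterizes the correspondence $S_1\leftrightarrow S_2$ solely by the equation $F(S_1)=F(S_2)$, and $F$ was built only from the reverse column bumping map $f$ and the notion of associated tableau of \S\ref{sec:2.6} and \S\ref{sec:3}---it makes no reference to $V_\circ$. Hence the bijections $\mathcal{S}(\lambda/\mu,X)\leftrightarrow\mathcal{S}(\lambda/\mu,Y)$ attached to any two auxiliary tableaux $V_\circ$ and $V_\circ'$ are described by the same condition and coincide.

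I expect the one genuinely delicate point to be making the claim used in the ``only if'' step rigorous, namely that $G$ does not depend on whether $X$ or $Y$ is the tableau being rectified: one has to check that $G([T,U])$ is recovered from the pair $[T,U]$ and $V_\circ$ alone by the reverse column bumping of \S\ref{sec:4.1}, with no input from the rectified tableau. Granting that, the rest is a short diagram chase and one use of the injectivity of $G$. Should that point prove awkward to phrase, a safe alternative is to regard $F$ as a single map on all skew tableaux of shape $\lambda/\mu$ and to show that its fibres are exactly the orbits of the $V_\circ$-correspondence, which again reduces to the same relation and the injectivity of $G$.
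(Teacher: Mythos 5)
Your argument is correct and is essentially the proof the paper intends: Theorem \ref{thm:main2} is stated there as an immediate consequence of the construction of the injections $F$ and $G$ and the relation $F(S)=G([T,U])$, and your write-up simply makes that deduction explicit. In particular you correctly isolate and resolve the only nontrivial point, namely that $G$ is determined by $V_\circ$, $\lambda$, $\mu$ and the common shape of $X$ and $Y$ (via $R$ and the reverse column bumping), so it is the same map on $\mathcal{T}(\lambda,\mu,V_\circ)$ for both $X$ and $Y$.
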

\begin{cor}
Two tableaux of the same shapes are shape equivalent if and only if they correspond with each other through the Littlewood-Richardson correspondence.
\end{cor}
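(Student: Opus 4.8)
The plan is to combine Theorem~\ref{thm:main1} with Theorem~\ref{thm:main2}, bridging them by the concentrated diagram~(\ref{eq:diagram-of-rectification-compact}). Let $S_1$ and $S_2$ be skew tableaux of a common shape $\lambda/\mu$; write $M_i$ for the associated circled array of $S_i$, let $X_i$ be the rectification of $S_i$, and put $Z_i=F(S_i)\in\mathcal{T}(\mu)$, the image of $S_i$ under the embedding $F$ introduced above (this is meaningful since $S_i\in\mathcal{S}(\lambda/\mu,X_i)$ and $F$ always takes values in $\mathcal{T}(\mu)$). I will prove that the following four assertions are equivalent: (i) $S_1$ and $S_2$ are shape equivalent; (ii) $M_1=M_2$; (iii) $Z_1=Z_2$; (iv) $S_1$ and $S_2$ correspond under the Littlewood--Richardson correspondence. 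Here (i)$\Leftrightarrow$(ii) is exactly Theorem~\ref{thm:main1} (applicable because $S_1$ and $S_2$ share the skew shape $\lambda/\mu$), while (iii)$\Leftrightarrow$(iv) is exactly Theorem~\ref{thm:main2}, once the correspondence in (iv) is known to be defined, which is handled at the end. So the real work is the equivalence (ii)$\Leftrightarrow$(iii).

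To prove (ii)$\Leftrightarrow$(iii), I would instantiate~(\ref{eq:diagram-of-rectification-compact}) for $S_1$ and for $S_2$ in turn. For a skew tableau $S$ of shape $\lambda/\mu$, the top input of that diagram is the associated circled array $M$ of $S$, and the right output is the tableau $Z=F(S)$ of shape $\mu$. Its left input is the tableau $U(\mu)$: this left input is the associated tableau of the standard tableau of shape $\mu$ that encodes the rectification of $S$, and the associated tableau of any standard tableau of shape $\mu$ equals $U(\mu)$ by the lemma stated just after the definition of $U(\mu)$, so it depends on $\mu$ alone. Its bottom output is the empty array $\bm{\emptyset}$, by Lemma~\ref{lemma:emptyrow}, since the rectification of $S$ is a non-skew tableau. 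Now recall that~(\ref{eq:diagram-of-rectification-compact}) defines a bijection between pairs consisting of a circled array and a tableau of shape $\mu$, built from the maps of Proposition~\ref{prop:map-of-UI}; it is the same bijection for $S_1$ and $S_2$, since both have skew shape $\lambda/\mu$. It carries $(M_1,U(\mu))$ to $(\bm{\emptyset},Z_1)$ and $(M_2,U(\mu))$ to $(\bm{\emptyset},Z_2)$, so, being a bijection, it yields $M_1=M_2$ if and only if $(\bm{\emptyset},Z_1)=(\bm{\emptyset},Z_2)$, that is, if and only if $Z_1=Z_2$.

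It remains to check that Theorem~\ref{thm:main2}, which is stated for skew tableaux whose rectifications share a shape, is applicable whenever we invoke it. If any one of (i), (ii), (iii) holds, then (i) holds (by the equivalences (i)$\Leftrightarrow$(ii)$\Leftrightarrow$(iii) just established), and two shape-equivalent skew tableaux of the same skew shape undergo the same shape changes under the jeu de taquin slides that rectify them; hence $X_1$ and $X_2$ have the same shape. If instead (iv) holds, then $X_1$ and $X_2$ have the same shape by the very definition of the Littlewood--Richardson correspondence. In either case Theorem~\ref{thm:main2} applies and furnishes (iii)$\Leftrightarrow$(iv), closing the cycle. This proves the corollary; and since assertion (iii) involves no choice of the auxiliary tableau $V_\circ$, the Littlewood--Richardson correspondence is independent of $V_\circ$, as already recorded in Theorem~\ref{thm:main2}.

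I expect the only delicate step to be (ii)$\Leftrightarrow$(iii). Three points need care there: that~(\ref{eq:diagram-of-rectification-compact}) is genuinely one bijection, not a correspondence read off case by case; that its left input is the \emph{same} tableau $U(\mu)$ for both $S_1$ and $S_2$, which is where the hypothesis of equal skew shape $\lambda/\mu$ enters, through the lemma identifying the associated tableau of a standard tableau of shape $\mu$ with $U(\mu)$; and that its bottom output is always $\bm{\emptyset}$, which is Lemma~\ref{lemma:emptyrow} together with the fact that rectification terminates at a non-skew tableau. Everything else is a formal concatenation of Theorems~\ref{thm:main1} and~\ref{thm:main2}.
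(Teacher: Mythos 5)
Your proposal is correct and follows essentially the same route as the paper: both arguments read the concentrated diagram~(\ref{eq:diagram-of-rectification-compact}) with inputs $(M_i,U(\mu))$ and outputs $(\bm{\emptyset},F(S_i))$ to get $M_1=M_2\iff F(S_1)=F(S_2)$, then chain this with Theorems~\ref{thm:main1} and~\ref{thm:main2}. You merely make explicit some details the paper leaves implicit (the bijectivity of the diagram, the identification of the left input with $U(\mu)$, and the applicability of Theorem~\ref{thm:main2}), which is a sound elaboration rather than a different proof.
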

\begin{proof}
Let $S_1,S_2$ be two tableaux of the same shapes and $M_1,M_2$ be their associated circled arrays.
From the diagram 
$
\sayou{
&M_i& \\
U(\mu)&\+& F(S_i)\\
&\bm{\emptyset}&
}
$
($i=1,2$), we have $F(S_1)=F(S_2)\iff M_1=M_2 \iff $ $S_1$ and $S_2$ are shape equivalent.
\end{proof}

\subsection*{Acknowledgment}
This work is partially supported by JSPS KAKENHI:26800062.

\appendix

\section{Basics on the combinatorics of Young tableaux}\label{sec:appA}

A box $B$ in a Young diagram is said to be {\it placed in the corner} if there exists no box below nor on the right of $B$.
For a skew diagram $\lambda/\mu$, the corner of $\lambda$ is called the {\it outside corner} and the corner of $\mu$ is called the {\it inside corner}.

For a skew tableau $T$ and an inside corner $b$, the {\it jeu de taquin starting from $b$} is defined as follows:
(i) Compare the two numbers in the boxes below and on the right of the hole $b$, and slide a box with smaller number to $b$.
If these two numbers are same, slide the box below $b$.
(ii) Compare the two numbers in the boxes below and on the right of the hole, and slide a box according to the same rule in (i).
(iii) Repeat (ii) until the hole reaches to the outside corner.

The following is an example of jeu de taquin.
The grayed boxes denote the holes.
\[
\Tableau{ \bl& \bl& 1 & 3\\ \bl& \gray & 2 & 3\\ 1 & 2 & 3& 4\\ 2&4&5}
\qquad
\Tableau{ \bl& \bl& 1 & 3\\ \bl& 2 & 2 & 3\\ 1 & \gray  & 3&4\\ 2&4&5}
\qquad 
\Tableau{ \bl& \bl& 1 & 3\\ \bl& 2 & 2 & 3\\ 1 & 3  & \gray &4\\ 2&4&5}
\qquad 
\Tableau{ \bl& \bl& 1 & 3\\ \bl& 2 & 2 & 3\\ 1 & 3  & 4\\ 2&4&5}
\]
In this example, a jeu de taquin starts in the $2^\mathrm{nd}$ row, and ends in the $3^\mathrm{rd}$ row.

Let $T$ be a tableau and $t$ be a number.
The {\it row bumping} (or {\it row insertion}) of $t$ to $T$ is defined as follows:
(i) If $t$ is equal to or greater than all the numbers in the $1^\mathrm{st}$ row of $T$, put a new box with $t$ at the place next to the rightmost box of this row.
If not, $t$ ``bumps'' the leftmost number greater than $t$, then the ``bumped'' number goes to the next row.
(ii) Apply similar procedure as (i) to the next row of $T$ and the bumped number.
(iii) Repeat (ii) until no more number is bumped.

The following diagram presents the row bumping of $3$ to the tableau.
\[
\Tableau{1&3&4&5&\bl \quad\la 3\\2&4&6&6\\4&5\\6}
\qquad
\Tableau{1&3&3&5\\2&4&6&6&\bl \quad\la 4\\4&5\\6}
\qquad
\Tableau{1&3&3&5\\2&4&4&6\\4&5&\bl& \bl &\bl \quad\la 6\\6}
\qquad
\Tableau{1&3&3&5\\2&4&4&6\\4&5&6\\6}.
\]
The tableau obtained by row bumping of $t$ to $T$ is denoted by 
\[
T\la t\qquad \mbox{or}\qquad T\la \Tableau{t}.
\]

Moreover, the {\it column bumping} (or {\it column insertion}) of $t$ to $T$ is defined as follows:
(i) If $t$ is {\it greater than} all the numbers in the $1^\mathrm{st}$ column of $T$, put a new box with $t$ at the place next to the box at the bottom of this column.
If not, $t$ ``bumps'' the number at the highest position among the numbers {\it equal to or greater than} $t$, then the ``bumped'' number goes to the next column.
(ii) Apply similar procedure as (i) to the next column of $T$ and the bumped number.
(iii) Repeat (ii) until no more number is bumped.
The tableau obtained by column bumping of $t$ to $T$ is denoted by 
\[
t\to T\qquad \mbox{or}\qquad \Tableau{t}\to T.
\]

\section{Takahashi-Satsuma Box-Ball system }\label{sec:appB}

Let us consider the map $(U_i,V_i)_{i=1,2,\dots}\mapsto (U'_i,V'_i)_{i=1,2,\dots}$ that is defined by the equation $F_1(U)F_1(V)=F_{2}(V')F_1(U')$.
This map is expressed by $\mathcal{L}$-terms.
Let $L=\o{U_i}$ and $W=\o{V_i}$ be the tropical variable.
Then the tropical map $(L_i,W_i)_{i=1,2,\dots}\mapsto (L'_i,W'_i)_{i=1,2,\dots}$ is expressed as 
\[
L_i'=\min\left[W_i,L_i+\max_k[0,\textstyle \sum_{j=1}^k(L_{i-j}-W_{i-j})]\right],\qquad
W_i'=L_i+W_i-L_i'.
\]
The {\it Takahashi-Satsuma Box-Ball system}~\cite{takahashi1990soliton} is a combinatorial interpretation of this map, which is expressed as follows.

Let us consider a sequence of infinitely many boxes such as 
\begin{equation}\label{eq:BoxBall}
\cdots \Tableau{&&&\tama{}& & \tama{}&\tama{} &\tama{} & & &\tama{}& & & & }\cdots
\end{equation}
Here each box either contains a ball or is empty.
A string of balls is called a {\it soliton}.
For instance, this picture contains three solitons.

Let $L_i$ be the number of balls that are contained in the $i^\mathrm{th}$ soliton from left, and $W_i$ be the distance between $i^\mathrm{th}$ and $(i+1)^\mathrm{th}$ solitons.
Then (\ref{eq:BoxBall}) is expressed as the following combinatorial rule: (i) Create a copy of each ball. 
(ii) Move each copy to the nearest empty box on the right.
(iii) Delete the original balls.
This procedure is called the {\it time evolution role} of the Takahashi-Satsuma Box-Ball system.
The following is an example of the combinatorial expression of the time evolution $(L_i,W_i)_{i=1,2,3}\mapsto (L'_i,W'_i)_{i=1,2,3}$:
\begin{gather*}
(L_i,W_i);\qquad \cdots \Tableau{&&&\tama{}& & \tama{}&\tama{} &\tama{} & & &\tama{}& & & & }\cdots\\
(L'_i,W'_i);\qquad \cdots \Tableau{&&& & \tama{} & & & &\tama{} &\tama{} & &\tama{}&\tama{} &\tama{}  & }\cdots
\end{gather*}
For more detail, see 

\section{The correspondence $Z\leftrightarrow M$}\label{sec:appC}

Here is a short list of examples of the correspondence $Z\leftrightarrow M$:
\[
\Tableau{1}\leftrightarrow\Tableau{\maru{1}}\qquad
\Tableau{1&1}\leftrightarrow\Tableau{\maru{1}&\maru{1}}\qquad
\Tableau{1&1&\cdots\bl&1}\leftrightarrow\Tableau{\maru{1}&\maru{1}&\cdots\bl&\maru{1}}
\]
\[
\Tableau{1&1&2\\2}\leftrightarrow\Tableau{\maru{1}&\maru{2}\\\maru{1}&\maru{1}}
\quad\Tableau{1&1&2\\3}\leftrightarrow\Tableau{\maru{1}&\maru{1}\\\maru{2}\\\maru{1}}
\quad\Tableau{1&2&2\\2}\leftrightarrow\Tableau{\maru{2}\\\maru{1}&\maru{1}&\maru{1}}
\quad\Tableau{1&2&2\\3}\leftrightarrow\Tableau{\maru{1}\\\maru{1}&\maru{2}\\\maru{1}}
\]
\[
\Tableau{1&2\\3}\leftrightarrow\Tableau{\maru{1}\\\maru{2}\\\maru{1}}\qquad
\Tableau{2&3\\4}\leftrightarrow\Tableau{\emp\\\maru{1}\\\maru{2}\\\maru{1}}\qquad
\Tableau{1&3\\2&5\\4}\leftrightarrow\Tableau{\maru{3}\\\maru{2}\\\maru{2}\\\maru{1}\\\maru{1}}\qquad
\Tableau{1&2&2&3\\2&3&3\\4}\leftrightarrow\Tableau{\maru{2}\\\maru{1}&\maru{2}&\maru{3}\\\maru{1}&\maru{1}&\maru{2}\\\maru{1}}
\]
Let $Z^\ast$ denote the {\it Sch\"{u}tzenberger dual tableau}\footnote{See, for example, \cite[Appendix A]{fulton_1996}.} of $Z$, and $i^\ast$ denote the {\it opposite number} of $i$.
We have the following conjecture about the correspondence $Z\leftrightarrow M$:
\begin{conj}
Then the number of $j$'s in the $i^\mathrm{th}$ row of $M$ equals to the number of $i^\ast$'s in the $j^\mathrm{th}$ row of $R^\ast$.
\end{conj}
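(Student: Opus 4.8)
The plan is to derive the identity from a single transpose-type symmetry of the matrix equations that encode the correspondence $Z\leftrightarrow M$, namely (\ref{eq:F-geometric-tableau}) read together with (\ref{eq:def-of-UI}) and the diagram (\ref{eq:diagram-of-rectification-compact}). First I would make the statement precise: take $n$ to be the largest entry of $Z$ (enlarging $n$ changes nothing, since it only appends empty rows to the evacuation and unused colours to $M$), set $i^{\ast}=n+1-i$, and let $Z^{\ast}$ be the Sch\"{u}tzenberger evacuation of $Z$ over $\{1,\dots,n\}$, so that $Z^{\ast}$ has the same shape $\mu$ as $Z$ and its content is that of $Z$ reversed; one verifies the equality on the list of pairs in \S\ref{sec:appC}. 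As a preliminary I would also record the two ``marginals'' of the conjecture: summing over $j$ it says the $i^{\mathrm{th}}$ row of $M$ has exactly (number of $i$'s in $Z$) boxes, and summing over $i$ it says $M$ contains exactly $\mu_j$ copies of $j$; both follow directly from the construction of $M$ (Theorem \ref{thm:geometric-circled-tableau} and the conservation laws of the box-ball system) together with the content and shape reversal of evacuation, and they already constrain the structure considerably.

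For the main step, recall from Theorem \ref{thm:geometric-circled-tableau} that $M$ is the tropicalization of the factorization data $(U_{i,j})$ in (\ref{eq:F-geometric-tableau}), and that $Z$ is the tableau datum carried by the $E$-factors of (\ref{eq:def-of-UI}) through the associated-tableau construction of \S\ref{sec:2.6}; hence it suffices to prove the corresponding identity of $\mathcal{L}$-terms, after which tropicalization is automatic. The key is the map $A\mapsto\widehat A:=JA^{\mathrm T}J$, where ${}^{\mathrm T}$ is transpose and $J$ is the anti-diagonal involution $e_p\leftrightarrow e_{n+1-p}$ of the truncation to size $n$: it is an anti-automorphism of the lower unitriangular matrices which reverses the sub-diagonal (``colour'') index $p\mapsto p^{\ast}$, and, because it also reverses the order of a product, it turns the factorizations in (\ref{eq:F-geometric-tableau})--(\ref{eq:def-of-UI}) back-to-front. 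Applying $A\mapsto\widehat A$ to the matrix equation behind (\ref{eq:diagram-of-rectification-compact}) and relabelling yields an equation of exactly the same form, but with the row index $i$ and the colour $j$ interchanged and every colour replaced by its opposite. Matching it term-by-term against the original, and passing both sides through Theorem \ref{thm:geometric-circled-tableau} and the definition of the associated tableau, one finds that ``number of $j$'s in the $i^{\mathrm{th}}$ row of $M$'' becomes ``number of $i^{\ast}$'s in the $j^{\mathrm{th}}$ row of the tableau obtained from $Z$ by transposing, reversing and complementing its $E$-matrix data''. That tableau is exactly the Sch\"{u}tzenberger dual $Z^{\ast}$ -- this is the matrix incarnation of the classical description of evacuation as reverse-and-complement of the reading word -- so comparing the two readings gives the conjecture.

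I expect the decisive obstacle to be this last identification: verifying rigorously that $A\mapsto\widehat A$, once pushed through the Takahashi--Satsuma dynamics that compute the action $\leftharpoonup$ (\S\ref{sec:appB}) and through the associated-tableau construction, really realizes evacuation on $Z$, and not just some operation that agrees with it on small examples. A robust route is to show that $A\mapsto\widehat A$ intertwines the one-rowed-array action with its ``space-reversed, ball$\leftrightarrow$hole exchanged'' version -- the particle--hole symmetry of the box-ball system -- and that this symmetry, transported from $Z$ to its $E$-matrix data, becomes the standard promotion/evacuation recursion on tableaux of shape $\mu$; an alternative is induction on $|Z|$, peeling off a corner box of $Z$ together with the matching soliton in $M$ and invoking a known recursion for evacuation. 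A secondary technical point is the bookkeeping of the two truncation parameters: one must take $N$ large as in Theorem \ref{thm:geometric-circled-tableau} (using (\ref{eq:cond1})--(\ref{eq:cond2})) and $n$ large enough that the infinite tails of all the matrices are inert, so that $J$ and ${}^{\mathrm T}$ act only on finite blocks and no colour or row is lost under the reversal -- routine, but it must be carried out with care.
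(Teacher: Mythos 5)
First, be aware that the paper offers no proof of this statement: it appears only as a Conjecture in Appendix~\ref{sec:appC}, supported by the list of examples there, so a correct argument would be a new result rather than a reconstruction. As written, however, your proposal does not close the gap. The central step --- that applying $A\mapsto JA^{\mathrm T}J$ to the factorizations behind (\ref{eq:F-geometric-tableau}) and (\ref{eq:def-of-UI}) ``yields an equation of exactly the same form, but with the row index $i$ and the colour $j$ interchanged'' --- is not what that map does. In Theorem~\ref{thm:geometric-circled-tableau} the row index of $M$ is the subscript $k$ of the factor $F_k(U_k)$, i.e.\ the \emph{position of a factor in the product}, while the colour is the \emph{entry index} inside the vector $U_k$. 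An anti-automorphism of the matrix algebra reverses the order of the factors and reverses the entry index within each factor, but it sends factor-position to factor-position and entry-index to entry-index; it cannot exchange the two kinds of index. Exchanging ``which factor'' with ``which entry'' is exactly the nontrivial row--colour duality the conjecture asserts, so this step assumes what is to be proved. A second structural problem: the $E(I)$ factors in (\ref{eq:def-of-UI}) are upper bidiagonal with the data on the \emph{diagonal}, so $JE(I)^{\mathrm T}J$ is neither of the form $E(I')$ nor of the form $F(V')$, and the transformed equation is not ``of the same form'' even at the level of matrix shapes; the symmetry would at best relate the $F$-picture to some third factorization that you have not identified.

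Beyond this, you yourself flag the identification of the transformed tableau with the Sch\"utzenberger dual as the ``decisive obstacle,'' and it genuinely is one: reverse-and-complement of the reading word computes evacuation only for rectangular shapes, while for general $\mu$ evacuation is the rectification of the rotated complement, so this step needs a real argument (your suggested route via the particle--hole symmetry of the box-ball system is plausible but entirely unexecuted). The preliminary marginal checks --- that the $i^{\mathrm{th}}$ row of $M$ has as many boxes as there are $i$'s in $Z$, and that $M$ contains $\mu_j$ copies of $j$ --- are correct and worth recording, and the truncation bookkeeping is indeed routine; but with the two essential steps missing the proposal is a research plan, not a proof.
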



\end{document}